\documentclass[final]{article}

\usepackage[affil-it]{authblk}

\usepackage{amsmath,graphicx,latexsym,amssymb, amscd, psfrag,verbatim, amsfonts,amsthm}
\usepackage{amsfonts,bm,mathrsfs,subfigure}
\usepackage{algorithmic,algorithm,amsxtra,color}
\usepackage[algo2e,ruled,vlined]{algorithm2e}
\usepackage{geometry,graphicx}

\usepackage{setspace}

\textheight=8.5in \textwidth=6.5in \oddsidemargin=0in
\evensidemargin=0in \hoffset=0.cm \voffset=-0.2in


\newtheorem{theorem}{Theorem}
\newtheorem{lemma}{Lemma}

\newtheorem{remark}{Remark}




\newcommand{\bn}{\bm n}
\newcommand{\bw}{\bm w}

\newcommand{\bz}{\bm z}
\newcommand{\bx}{\bm x}

\newcommand{\bu}{\bm u}

\newcommand{\bv}{\bm v}
\newcommand{\bE}{\mathcal{E}}

\newcommand{\bg}{\bm g}

\newcommand{\bS}{\bm \sigma}

\newcommand{\bSS}{\bm S}
\newcommand{\bP}{\bm P}

\newcommand{\bF}{\bm F}
\newcommand{\bI}{\bm I}
\newcommand{\bJ}{\bm J}

\newcommand{\bphi}{\bm \phi}

\newcommand{\cA}{\mathcal A}



\begin{document}


\title{Modeling and Simulation for Fluid--Rotating Structure Interaction}

\author{Kai Yang\thanks{Department of Mechanical Engineering, Stanford University, 496 Lomita Mall, Stanford, CA 94305, USA.  Email:\texttt{kaiyang15@stanford.edu}},
Pengtao Sun\thanks{Corresponding author.  Department of Mathematical Sciences,
University of Nevada Las Vegas, 4505 Maryland Parkway, Las Vegas, NV
89154, USA.  Email:\texttt{pengtao.sun@unlv.edu}},  
Lu Wang\thanks{Lawrence Livermore National Laboratory,
7000 East Avenue, Livermore, CA 94550, USA. Email:\texttt{wang\_lu85@llnl.gov}},
Jinchao Xu\thanks{Principal corresponding author. Department of Mathematics, Pennsylvania State
University, University park, PA 16802, USA.  Email:\texttt{xu@math.psu.edu}},
Lixiang Zhang\thanks{Department of Mechanical Engineering, Kunming University of Science and technology,
Kunming, 68 Wenchang Road, Yunnan, China.  Email:\texttt{zlxzcc@126.com}}}
\maketitle
%
%
%
%
%
%


\begin{abstract}In this paper, we study a dynamic fluid-structure
interaction (FSI) model for an elastic structure that is immersed and
spinning in the fluid. We develop a linear constitutive model to
describe the motion of a rotational elastic structure which is
suitable for the application of arbitrary Lagrangian-Eulerian (ALE)
method in FSI simulation. Additionally, a novel ALE mapping method
is designed to generate the moving fluid mesh while the deformable
structure spins in a non-axisymmetric fluid channel.
The structure velocity is adopted as the principle unknown to form a
monolithic saddle-point system together with fluid velocity and
pressure. We discretize the nonlinear saddle-point system with mixed
finite element method and Newton's linearization, and prove that the
derived saddle-point problem is well-posed. The developed
methodology is applied to a self-defined elastic structure and a
realistic hydro-turbine under a prescribed angular velocity. Both
illustrate the satisfactory numerical results of an elastic structure
that is deforming and rotating while interacting with the fluid. The
numerical validation is also conducted to demonstrate the modeling
consistency.
\end{abstract}

{\bf Keywords:}
Fluid-rotating structure interaction,  arbitrary Lagrangian
Eulerian (ALE) method,  monolithic algorithm,  mixed finite
element method,  linear elasticity, master-slave relations.


\section{Introduction}
Fluid-structure interaction (FSI) problem remains as the one of the
most challenging problems in the computational mechanics and the
computational fluid dynamics. Researchers have conducted various
studies on certain types of FSI problems (such as fluid-rigid body
interaction
\cite{Sarrate.J;Huerta.A;Donea.J2001a,Desjardins.B2000a,Capdevielle.S2012a,Hu.H1996a,Johnson.A;Tezduyar.T1997a},
fluid with non-rotational structure
\cite{Hirth.C;Amsden.A;Cook.J1974a,Taylor.C;Hughes.T;Zarins.C1998a,Belytschko.T;Kennedy.J1978a,Belytschko.T;Kennedy.J;Schoeberle.D1980a,Nobile.F2001a,Xu.J;Yang.K2015a},
FSI with stationary fluid domain
\cite{Du.Q;Gunzburger.M;Hou.L;Lee.J2003a,Zhang.L;Guo.Y;Wang.W2009a,Zhang.L;Guo.Y;Zhang.H2010a,Zhang.L;Guo.Y;Wang.W2007a}).
However, there is a dearth of practical models on the FSI problems
involving a structure that rotates and deforms, i.e., an elastic
rotor. The development of mathematical model and numerical
methodology is critical in practice for large-scale advanced FSI
simulation involving an elastic rotor, as it largely benefits and
guides the design, evaluation and prediction of various
applications, such as the hydro-turbines, jet engine, and the
artificial heart pump. Therefore, it is of great significance for us
to develop an efficient and accurate mathematical model and
numerical method to handle the fluid-structure interaction involving
a rotational and deformable structure motion.

The difficulties associated with simulating the rotational structure
in FSI stem from the fact that the simulations rely on the coupling
of two distinct descriptions: the Lagrangian description for the
solid and the Eulerian coordinate for the fluid. The arbitrary
Lagrangian Eulerian (ALE) method \cite{Hirth.C;Amsden.A;Cook.J1974a,
Hughes.T;Liu.W;Zimmermann.T1981a, Huerta.A;Liu.W1988a,
Nitikitpaiboon.C;Bathe.K1993a, 2010a} copes with this difficulty by
adapting the fluid mesh to accomadate the deformations of the solid
on the interface. By using ALE method, the meshes of fluid and
structure are conforming on the interface if the Lagrangian
structure mesh is moved to the Eulerian one following ALE mapping.
This is important since the degrees of freedom on the interface are
naturally shared by both fluid and structure, which facilitates the
implementation of our discretization. However, ALE method has a
severe drawback, i.e., when the structure has a large displacement
or deformation, ALE mapping may very likely distort the fluid mesh.
Even the most advanced and best-tuned ALE-based scheme cannot
perform well without re-meshing. And, if the re-meshing is employed
to produce the fluid mesh, then the number of mesh nodes and/or
elements over different time levels can no longer be guaranteed to be
the same, and thus the interpolations of variables between every two
adjacent time steps are unavoidable, resulting in a time-consuming
and even unstable geometrical process, especially in the case of
high dimension. As a matter of fact, it is difficult to directly
apply ALE method to fluid-rotating structure
+interaction problems.

Many approaches have been proposed to deal with rotational structure
in FSI problems.  Several of these approaches model the wind turbine
rotor
\cite{Bazilevs.Y;Hsu.M;Takizawa.K;Tezduyar.T2012a,Bazilevs.Y;Hsu.M;Scott.M2012a,Hsu.M;Bazilevs.Y2012a,Bazilevs.Y;Hsu.M;Kiendl.J;Wuchner.R;Bletzinger.K2011a,Bazilevs.Y;Hsu.M2011a}
by coupling the finite element method (FEM) for fluid dynamics, the
isogeometric analysis (IGA) for structure mechanics, and the
non-conforming discretization on the interface of fluid and
structure. To apply ALE approach, they introduce an artificial
cylindrical buffer zone to enclose the rotor inside, and let the
mesh of the sliding cylindrical interface using weakly enforcement
of continuity of solution fields, and introduce extra unknowns
and/or penalties to reinforce the continuity on the interface by
means of, e.g., Lagrange multiplier or discontinuous Galerkin (DG)
method. Shear-slip method
\cite{Behr.M;Tezduyar.T1999a,Behr.M;Tezduyar.T2001a,Bazilevs.Y;Hsu.M;Takizawa.K;Tezduyar.T2012a}
was introduced to locally reconnect the mesh in order to keep the
mesh quality when the structure is undergoing translation or
rotation.



In this paper, we develop a new ALE method to produce a body-fitted
moving fluid mesh that is conforming with the rotational and
deformable structure mesh on the interface. And, to make our ALE
method work for the elastic rotor that is immersed in the fluid, we
first derive a linear structure equation that involves the
rotational matrix from the nonlinear structure model based upon a
decomposition of structure displacement into two components of
rotation and deformation. In addition to that, we define an
artificial cylindrical buffer zone in the fluid domain to embrace
the elastic rotor inside and rotate together on the same axis of
rotation with the same angular velocity. If the fluid channel is
non-axisymmetric, we then need to find out the relative motion
information between the rotational fluid subdomain (the cylindrical
buffer zone) and the stationary fluid subdomain (the rest part of
fluid domain) by matching the grid on the sliding interface, and
define them into our new ALE mapping. Finally, we develop a very
stable and easily attainable ALE method for the generation of a
rotational and deformable fluid mesh that matches with the structure
mesh on the interface.

Next, with the velocity instead of the displacement as the principle
unknown of structure, we define a monolithic saddle-point system for
the studied FSI problem, and further a monolithic algorithm to solve
the coupled fluid and structure equations. We prove the
well-posedness of the discrete linear saddle-point system resulting from mixed finite element discretization and Newton's
method. Numerical experiments are carried out for a self-defined
elastic rotor and a realistic hydro-turbine to illustrate that our
developed structure model and ALE-based monolithic method are
efficient and stable for the FSI problem involving an elastic rotor.
A numerical validation is also conducted to demonstrate the
consistency of our developed rotational structure model with respect
to the different structure parameters.

The paper is organized as follows. In Sections
\ref{sec:FSImodeling}-\ref{sec:ale}, we first define the general
governing equations, interface conditions and boundary conditions of
the FSI problem, and their weak formulations along with the ALE
techniques, then we introduce the monolithic weak formulation of
FSI. In Section \ref{sec:FSIwithRotation}, we develop an approximate
formulation of the constitutive equation for the rotating structure. Then, we
define a new ALE mapping for an elastic rotor that is immersed in
the fluid in Section \ref{sec:newALE}, and a monolithic numerical
discretization in Section \ref{sec:ALEdiscrete}, where, we also
analyze the well-posedness of the resulting discrete saddle-point
system. In Section \ref{section:algorithm}, we describe our
monolithic algorithm in detail. Then we show some numerical
experiments and conduct the numerical validations in Section
\ref{sec:numerics}. We draw the conclusions and outline the future
works in Section \ref{sec:conclusion}.

\section{Modeling of the fluid-structure interaction (FSI) problem}
\label{sec:FSImodeling}
\begin{figure}[htbp]
\begin{center}
\graphicspath{{:figure:}}
\includegraphics[width=0.5\textwidth]{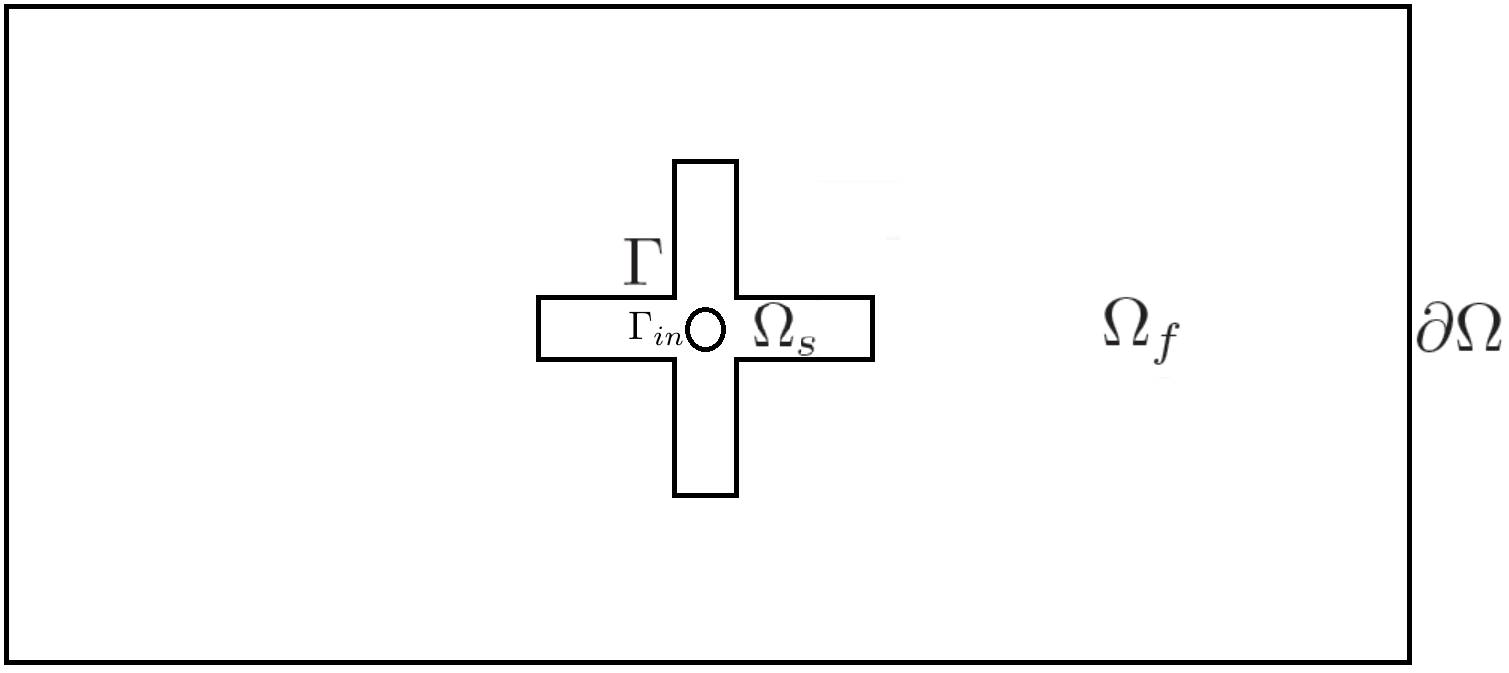}
\end{center}
\caption{A schematic domain of FSI involving an elastic rotor
$\Omega_s$ in 2D} \label{fig:rotationalFSI}
\end{figure}
%
%
%
%
%
%

As shown in Fig. \ref{fig:rotationalFSI}, we consider an elastic
rotor $\Omega_s(t)$ which is immersed in the fluid domain
$\Omega_f(t)$, and spins around its rigid axis of rotation displayed
as an inner circle $\Gamma_{in}$.
$\Omega_f(t)\cap\Omega_s(t)=\emptyset$. Denote the whole domain as
$\bar\Omega(t) = \bar\Omega_f(t)\cup \bar\Omega_s(t)\subset
\mathbb{R}^d\ (d= 2,3)$, and the interface of fluid and structure as
$\Gamma(t)=\partial\Omega_s(t)\cap\partial\Omega_f(t)$.  We use
$\partial_t$ to denote the time derivative $\frac{\partial}{\partial
t}$ in the rest of this paper.

For any position $\hat\bx\in\hat\Omega = \Omega(0)$, we denote by
$\bx(\hat\bx,t)\in\Omega(t)$ its position at time $t$.
$\bx(\hat\bx,t)$ is usually called flow map. We can also use
$\bx(\hat\bx,t)$ to denote the \emph{flow map}: $\hat\bx\mapsto
\bx(\hat\bx,t)$.  Then for given $t>0$, $\bx(\cdot,t)$ is a
diffeomorphism from $\Omega(0)$ to $\Omega(t)$.
%
For $({\hat{\bx}},t) \in \hat\Omega\times[0,T]$, we introduce the
following variables in \emph{Lagrangian coordinates} : the
displacement $\hat
{\bu}({{\hat{\bx}}},t)={\bx}({\hat{\bx}},t)-{\hat{\bx}}$, and the
velocity $\displaystyle  \hat {\bv}({\hat{\bx}},t)=\partial_t\hat
  {\bu}(\hat\bx,t)$.  Using the
relationship $\bx=\bx(\hat \bx,t)$, we also introduce the velocity
in \emph{Eulerian coordinates}: ${\bv}({\bx},t)=\hat
{\bv}({\hat{\bx}},t)$.  The symmetric part of the gradient of
velocity is denoted by $\displaystyle \bm\epsilon(\bv)=(\nabla
\bv+(\nabla \bv)^T)/2. $

For the sake of brevity of the notation, in the rest of this paper we
omit the temporal variable $t$ in those notations which are defined
in Eulerian coordinates, i.e., $\phi=\phi(t)$, and use $\hat\phi$ to
denote those variables which are defined in Lagrangian coordinates,
i.e., $\hat\phi=\phi(0)$ .


\paragraph{Fluid motion} The fluid is assumed to be Newtonian and incompressible.
It is described by the following conservations of momentum and mass,
i.e., Navier-Stokes equations, in terms of the fluid velocity
$\bv_f$ and pressure $p_f$ in Eulerian description.
\begin{equation}
\label{eq:fluid}
\left\{
\begin{aligned}
\rho_f(\partial_t\bv_f+\bv_f\cdot\nabla\bv_f)&=\nabla\cdot\bS_f+\bg_f, &\quad \text{in } \Omega_f,\\
\nabla\cdot\bv_f&=0, &\quad \text{in } \Omega_f,\\
\end{aligned}
\right.
\end{equation}
where
$$\bS_f=-p_f\bI+2\mu_f\bm\epsilon(\bv_f),$$
$\rho_f$ denotes the density of the fluid, $\mu_f$ the dynamic
viscosity of the fluid, and $\bg_f$ the external body force of the
fluid.
\paragraph{Structure motion}
The structure is assumed to be compressible. It is described by the
following conservations of momentum and mass, i.e., the dynamic
structure equations in terms of the structure velocity $\bv_s$ in
Eulerian description.
\begin{equation}
\label{eq:structure}
\left\{
\begin{aligned}
\rho_s(\partial_t\bv_s+\bv_s\cdot\nabla\bv_s)&=\nabla\cdot\bS_s+\bg_s, &\quad \text{in } \Omega_s,\\
\partial_t\rho_s+\nabla\cdot(\rho_s\bv_s)&=0, &\quad\text{in } \Omega_s,\\
\end{aligned}
\right.
\end{equation}
where $\rho_s$ denotes the density of the structure, and $\bg_s$ the
external body force of the structure. We define the Cauchy stress
tensor of the structure $\bS_s$ as follows by changing variables for
the structure equations from Eulerian coordinate to Lagrangian
coordinate. We then have the following structure equation defined in
Lagrangian system in terms of the structure displacement $\hat\bu_s$
in the initial (reference) domain $\hat\Omega_{s}$.
\begin{equation}
\label{eq:structure1}
\begin{aligned}
\hat\rho_s\partial_{tt}\hat\bu_s&=\nabla\cdot(\bJ\bS_{s}\bF^{-T})+\hat{\bg}_s,&\quad\text{in }\hat\Omega_s\\
\end{aligned}
\end{equation}
where  $\bF({\hat{\bx}},t)=\partial
  {\bx}/\partial {\hat{\bx}}=\bI+\nabla_{{\hat\bx}} \hat\bu_s$ is the
deformation gradient tensor, $\displaystyle
\bJ({\hat{\bx}},t)=det({\bF}({\hat{\bx}},t))$,
$\hat{\rho}_s=\bJ\rho_s$, and $\hat{\bg}_s=\bJ\bg_s$. The derivation
of (\ref{eq:structure1}) will be further shown in Section
\ref{sec:weakformFSI}.

We introduce the first Piola-Kirchhoff stress tensor $\bP_s$,
defined as $\bP_s=\bJ\bS_{s}\bF^{-T}$, to simplify the stress form
in (\ref{eq:structure1}). Further, $\bP_s=\bF\bSS$, where $\bSS$ is
called the second Piola-Kirchhoff stress tensor, and thus defined as
$\bSS=\bJ\bF^{-1}\bS_{s}\bF^{-T}$. Hence, the Cauchy stress tensor
$\bS_s$ can be defined as
$$
\bS_{s}(x)=\bJ^{-1}\bF\bSS\bF^T,
$$
which can be further specified based upon the constitutive law of a
given structure material, i.e., its the second Piola-Kirchhoff
stress tensor $\bSS$.

If we select the nonlinear St. Venant-Kirchhoff (STVK) material as
the material of elastic rotor that is popularly adopted in the
problems of hydro-turbine, wind-turbine and etc, we have the
constitutive law of STVK material as follows
\cite{Sugiyama.K;Ii.S;Takeuchi.S;Takagi.S;Matsumoto.Y2011a,Richter.T2013a}
$$\bSS=2\mu_s\bE+\lambda_s (tr\bE)\bI,$$
where, $\bE$ is the Green-Lagrangian finite strain tensor, defined
as
$$ \bE=(\bF^T\bF-\bI)/2=\frac{1}{2}\left(\nabla_{{\hat\bx}} \hat\bu_s+(\nabla_{{\hat\bx}}
\hat\bu_s)^T+(\nabla_{{\hat\bx}} \hat\bu_s)^T\nabla_{{\hat\bx}}
\hat\bu_s\right),
$$
$\lambda_s=\frac{E\nu}{(1+\nu)(1-2\nu)}$ is the Lam\'{e}'s first
parameter, and $\mu_s=\frac{E}{2(1+\nu)}$ is the shear modulus, here
$E$ and $\nu$ denote the Young's modulus and Poisson's ratio,
respectively. Hence, we can particularly define
$$
\bS_{s}(x)=\frac{1}{\bJ}\bF(2\mu_s \bE+\lambda_s (tr\bE)\bI)\bF^T.
$$
Then, the structure equation (\ref{eq:structure1}) can be rewritten
as the following form for STVK material, in particular.
\begin{equation}\label{solid-eq-new0}
\hat\rho_s \partial_{tt} \hat{\bm u}_s=\nabla\cdot (2\mu_s{\bm
F}{\bE}+\lambda_s(tr{\bE}){\bm F})+\hat\bg_s.
\end{equation}
Apparently, (\ref{solid-eq-new0}) is a nonlinear equation. In
Section \ref{sec:FSIwithRotation}, we will develop a linearized
model, which is an approximation to (\ref{solid-eq-new0}) under the
assumption of small deformation, specifically for a deformable
structure undergoing a large rotation.

\paragraph{Interface conditions}
Suppose the no-slip type boundary conditions hold on the interface
of the fluid and structure, i.e., both velocity and normal stress
are continuous across the interface $\Gamma$. The interface
conditions on $\Gamma$ can be defined as follows in Eulerian
description.
\begin{eqnarray}
\bv_f=\bv_s=\partial_t\bu_s, &\text{on } \Gamma, \label{interface}\\
\bS_f\bn_f+\bS_s\bn_s=0, &\text{on } \Gamma, \label{interface1}
\end{eqnarray}
where $\bn_f$ and $\bn_s$ are the unit normal vector to the
interface $\Gamma$ from the fluid and from the structure,
respectively.

\paragraph{Boundary conditions}
To fully define the boundary conditions other than the interface
conditions, we split the outer boundary of fluid to two types:
$\partial\Omega_f\backslash\Gamma=\Gamma_{D,f}\cup\Gamma_{N,f}$ and
$\Gamma_{D,f}\cap\Gamma_{N,f}=\emptyset$, where, $\Gamma_{D,f}$
represents a part of the outer boundary of fluid on which Dirichlet
boundary condition is imposed, and $\Gamma_{N,f}$ represents the
rest part on which Neumann boundary condition is defined. Similarly,
we split the outer boundary of structure as
$\partial\hat\Omega_s\backslash\hat\Gamma =
\hat\Gamma_{D,s}\cup\hat\Gamma_{N,s}$ and
$\hat\Gamma_{D,s}\cap\hat\Gamma_{N,s}=\emptyset$.

The Dirichlet and Neumann boundary conditions for fluid and
structure are given as follows
\begin{eqnarray}
\bv_f= \bv_{D,f}, &\text{on } \Gamma_{D,f},\label{eq:DirichletBC_f}\\
\hat \bu_s=\hat \bu_{D,s}, &\text{on }
\hat\Gamma_{D,s},\label{eq:DirichletBC_s}\\
\bS_f\bn_f= \bg_{N,f}, &\text{on } \Gamma_{N,f},\label{eq:NeumannBC_f}\\
\bS_s\bn_s=\hat \bg_{N,s}, &\text{on }
\hat\Gamma_{N,s}.\label{eq:NeumannBC_s}
\end{eqnarray}
where, $\bv_{D,f}(\bx,t),\ \hat\bu_{D,s}(\hat\bx,t),\
\bg_{N,f}(\bx,t),\ \hat\bg_{N,s}(\hat\bx,t)$ are all prescribed. For
example, Dirichlet boundary conditions for fluid can be the inflow
boundary condition at the inlet and no-slip boundary condition on
the wall. Dirichlet boundary conditions for structure can be used
when it is clamped to the boundary. With $\bg_{N,f}=0$, the Neumann
boundary condition is adopted for fluid as the stress free boundary
condition at the outlet of flow channel. Neumann boundary condition
is employed for structure in FSI when the structure embraces the
fluid inside and is imposed an external force on its outer boundary.

In our case studied in this paper, the structure is immersed in the
fluid, thus Neumann boundary condition is not applied to the
structure, i.e., $\hat\Gamma_{N,s}=\emptyset$. And,
$\hat\Gamma_{D,s}=\Gamma_{in}$, on which $\hat\bu_{D,s}$ is given if
a rotation is prescribed. Later in this paper, we use the structure
velocity, $\hat\bv_s$, as the principle unknown instead of
$\hat\bu_s$, noting that $\hat\bv_s=\partial_t\hat\bu_s$. Then the
corresponding Dirichlet boundary condition is changed as
\begin{equation}
\label{eq:DirichletBC_sv}
\hat \bv_s=\partial_t\hat \bu_{D,s}, \mbox{ on } \hat\Gamma_{D,s}.
\end{equation}
\paragraph{Initial conditions}
The following initial conditions hold for fluid and structure
equations which are defined on the time interval $[0,T]$, with
$T>0$.
\begin{equation}\label{eq:Initial}
\begin{array}{ll}
\bv_f(0)=\bv^0_f(\bx),&\text{in } \Omega_f\\
\hat\bu_s(0)=\hat\bu_s^0(\hat\bx),\
\hat\bv_s(0)=\hat\bv_s^0(\hat\bx),&\text{in } \hat\Omega_s,
\end{array}
\end{equation}
where $\hat\bu_s^0(\hat\bx)$, $\hat\bv_s^0(\hat\bx)$, and
$\bv_f^0(\bx)$ are all prescribed.

\section{Weak formulation of the FSI problem}
\label{sec:weakformFSI} Now we derive the weak formulations of the
FSI problem. First, for any given domain
$\Omega\subset\mathbb{R}^d$, we define the $H^1$ space of vector
functions as
$${\mathbf H}^1(\Omega)= (H^1(\Omega))^d.
$$
Moreover, define
\begin{align*}
{\mathbf H}^1_0(\Omega_f)&:=\{\bu\in {\mathbf H}^1(\Omega_f)| \bu=0, \text{ on } \partial \Omega\cap\partial\Omega_f\},\\
{\mathbf H}^1_0(\hat\Omega_s)&:=\{\bu\in {\mathbf
H}^1(\hat\Omega_s)| \bu=0,
\text{ on } \partial \hat\Omega\cap\partial\hat\Omega_s\},\\
{\mathbf H}^1_0(\hat\Omega_f)&:=\{\bu\in {\mathbf
H}^1(\hat\Omega_f)| \bu=0,
\text{ on } \partial \hat\Omega\cap\partial\hat\Omega_f\}.\\
\end{align*}
Let us define the following Sobolev spaces:
\begin{equation}
\begin{aligned}
  \label{spaceV}
\mathbb{V}&:=\{(\bv_f,\hat\bv_s)\in {\mathbf H}^1_0(\Omega_f)\times
{\mathbf H}^1_0(\hat\Omega_s) \text{ such that }
\bv_f\circ\bx(\hat\bx,t)=\hat\bv_s, \text{ on }\hat\Gamma\},\\
\mathbb{Q}&:=L^2(\Omega_f).
\end{aligned}
\end{equation}

In order to formulate the FSI problem weakly, we use test functions
defined in $\Omega$, With the test function $\bm \phi\in
(H_0^1(\Omega))^d$, we first write the weak formulations of the
fluid and structure equations, respectively, as follows.
$$
\int_{\Omega_f}\rho_fD_t \bv_f\bm \phi d\bx+\int_{\Omega_f}\bm
\sigma_f:\bm \epsilon(\bm \phi)d\bx-\int_{\Gamma}\bm
\sigma_f\bn_f\cdot\bm\phi d\bx =\int_{\Omega_f}\bg_f\bm\phi d\bx,
$$
$$\int_{\Omega_s}\rho_s D_t
\bv_s\bm\phi d\bx+\int_{\Omega_s}{\bm\sigma}_s:\bm\epsilon(\bm\phi)
d\bx-\int_{\Gamma}{\bm\sigma}_s\bn_s\cdot\bm\phi d\bx
=\int_{\Omega_s}\bg_s\bm\phi d\bx,
$$
where $D_t\bm\psi=\partial_t\bm\psi+\bm\psi\cdot\nabla\bm\psi$.

We add up these two equations and cancel the boundary integral terms
because of the interface condition (\ref{interface1}), resulting in
$$
\int_{\Omega_f}\rho_f D_t \bv_f\bm\phi
d\bx+\int_{\Omega_f}\bm\sigma_f:\bm\epsilon(\bm\phi)d\bx+\int_{\Omega_s}\rho_sD_t
\bv_s\bm\phi d\bx+\int_{\Omega_s}{\bm\sigma}_s:\bm\epsilon(\bm\phi)
d\bx=\int_{\Omega_f}\bg_f\bm\phi d\bx+\int_{\Omega_s}\bg_s\bm\phi
d\bx.
$$
By a change of coordinates $\bx=\bx(\hat{\bx},t),$ the stress term
of structure part can be rewritten in Lagrangian coordinates
$$
\int_{\Omega_s}{\bm\sigma}_s:\bm\epsilon(\bm\phi)d\bx=\int_{\hat{\Omega}_s}
\hat{\bm\sigma}_s:\nabla_{\hat{\bx}}\hat{\bm\phi}\bF^{-1}\bJ
d\hat{\bx}=\int_{\hat{\Omega}_s}(\bJ\hat{\bm\sigma}_s\bF^{-T}):\nabla_{\hat{\bx}}\hat{\bm\phi}
d\hat{\bx},
$$
where $\hat{\bm\phi}(\hat\bx,t)=\bm\phi(\bx(\hat\bx,t),t)$ and
$\hat{\bm\sigma}_s(\hat\bx,t)={\bm\sigma}_s(\bx(\hat\bx,t),t)$.   We
also change the coordinates for the inertial term and the body force
term, we then get the following weak form of FSI problem.
\begin{equation}
  \label{FSI-vari}
\int_{\Omega_f}\left(\rho_f D_t \bv_f\bm\phi
+\bm\sigma_f:\bm\epsilon(\bm\phi)\right) d\bx+\int_{\hat
  \Omega_s}\left(\hat\rho_s\partial_{tt} \hat\bu_s\hat{\bm\phi}+\bP_s:\nabla_{\hat\bx} \hat{\bm\phi} \right)d\hat\bx
  =\int_{\Omega_f}\bg_f\bm\phi d\bx+\int_{\hat \Omega_s}\hat \bg_s \hat{\bm\phi} d\hat\bx,
\end{equation}
which holds for any $\bm\phi\in \mathbb{V}$. Here, as shown in
(\ref{eq:structure1}) in Section \ref{sec:FSImodeling}, we also
derive the first Piola-Kirchhoff stress tensor
$\bP_s=\bJ\hat{\bm\sigma}_s\bF^{-T}$, the density of the structure
$\hat\rho_s(\hat\bx,t)=\bJ(\hat\bx,t) \rho_s(\bx(\hat\bx,t),t)$, and
the external body force $\hat\bg_s(\hat\bx,t)=\bJ(\hat\bx,t)
\bg_s(\bx(\hat\bx,t),t)$. By the conservation of mass, $\hat\rho_s$
is independent of $t$.

Thus, we are able to define the weak formulations of FSI as follows.
Find $\bv_f$, $p$ and $\hat\bu_s$, satisfying
(\ref{eq:DirichletBC_f}), (\ref{eq:DirichletBC_s}), and
(\ref{eq:Initial}) such that for any given $t>0$, the following
equations hold for any $(\bm\phi,\hat{\bm\phi})\in \mathbb{V}$ and $
q\in \mathbb{Q}$
\begin{equation}
\label{eq:FSI_weak} \left\{
\begin{aligned}
(\hat\rho_s\partial_{tt}\hat\bu_s,\hat{\bm\phi})_{\hat\Omega_s}+\left(\rho_fD_t\bv_f,\bm\phi\right)_{\Omega_f}+(\bP_s(\hat\bu_s),\nabla\hat{\bm\phi})_{\hat\Omega_s}&+(\bm\sigma_f,\bm\epsilon(\bm\phi))_{\Omega_f}\\
&=(\hat{\bg}_s,\hat{\bm\phi})+(\bg_f,\bm\phi),\\
(\nabla\cdot \bv_f,q)_{\Omega_f}&=0,\\
\bv_f\circ \bx(\hat\bx,t)&=\partial_t \hat\bu_s,\quad \text{on } \hat \Gamma.\\
\end{aligned}
\right.
\end{equation}
where, the pair $(\cdot,\cdot)_\Omega$ stands for the $L^2$-inner
product on a domain $\Omega$, and $\nabla{\bm\phi}=\nabla_{\bx}
\bm\phi,\ \nabla\hat{\bm\phi}=\nabla_{\hat\bx} \hat{\bm\phi}$.


The above weak formulation shows that, on the continuous level, the
continuity of velocity of fluid and structure across the interface
is enforced in the definition of Sobolev space $\mathbb{V}$.
However, on the discrete level, we only need one set of degrees of
freedom of velocity to be shared by fluid and structure on the
interface. To that end, first of all, a conforming mesh across the
interface is needed, which is guaranteed by ALE method. Secondly, we
need to merge the two sets of degrees of freedom of velocity arising
from both fluid and structure on the interface to be one set only,
which can be done by means of the so-called ``Master-Slave
Relation'' technique. Thus, we no longer have any redundant degrees
of freedom to eliminate, nor the need of Lagrange multiplier to
enforce the continuity of velocity. We will further discuss this
technique combining with ALE method in the next section.

\section{Application of ALE method to the FSI problem}
\label{sec:ale} Due to the interaction between fluid and structure,
we need to solve Navier-Stokes equations on a moving fluid domain.
Re-meshing is always an option for a moving domain but it introduces
extra computational cost and needs the interpolation to transfer the
data between different time steps. Another frequently used approach
is so-called the arbitrary Lagrangian-Eulerian (ALE) method, with
which the moving fluid mesh sticks to the moving structure mesh and
is updated by preserving the number of mesh nodes always the same,
thus no interpolation occurs. The essential idea of ALE method is to
formulate and solve the fluid problem on a deforming mesh, which
deforms with the structure at the interface and then the fluid mesh
is smoothed within the fluid domain. First introduced for finite
element discretizations of the incompressible fluids in
\cite{Hughes.T;Liu.W;Zimmermann.T1981a,Donea.J;Giuliani.S;Halleux.J1982a},
the ALE method provides an approach to find the fluid mesh that can
fit the moving fluid domain $\Omega_f$. This mapping is a
diffeomorphism on the continuous level, and we use piecewise
polynomials to approximate it on the discrete level.  We assume that
the mesh motion is piecewise linear in the rest of this paper in
order to make sure the updated mesh by ALE mapping remains a
undistorted triangular mesh.

We denote the image of $\hat\Omega_f$ under the piecewise linear map
$\bx_{h,f}$ by $\Omega_f^n$.  $\Omega_f^n$ is triangulated by a
moving mesh with respect to time, denoted by $T_h(\Omega_f^n)$. Note
that $\Omega_f^n$ is assumed to be a polygonal domain in 2D, and a polyhedral
domain in 3D.
There are two main ingredients in the ALE approach:
\begin{enumerate}
\item Define how the grid is moving with respect to time such that it matches
  the structure displacement at the fluid-structure interface.
\item Define how the material derivatives are discretized on the moving grid.
\end{enumerate}

Given the structure trajectory $\bx(\hat \bx,t)$ defined on
$\hat\Gamma,$ the moving grid can be described by a diffeomorphism
$\cA(\cdot,t):\hat\Omega_f\mapsto \Omega_f$ that satisfies
\begin{equation}
\label{eq:ALE} \left\{
\begin{aligned}
\cA(\hat \bx,t)&=\hat \bx, &\text{ on }& \partial\hat\Omega_f\cap\partial\hat\Omega,\\
\cA(\hat \bx,t)&=\bx(\hat \bx,t), &\text{ on }& \hat \Gamma.\\
\end{aligned}
\right.
\end{equation}
ALE mappings satisfying (\ref{eq:ALE}) are by no means unique. In
the interior of $\hat \Omega_f$, the ALE mapping can be
``arbitrary".  One popular approach to uniquely determine $\cA$ is
to solve a partial differential equation
\begin{equation}\label{ALEdefination}
\mathcal{L}\cA=0,\quad\mbox{in }\hat\Omega_f.
\end{equation}
A popular choice for the operator $\mathcal L$ is the Laplacian,
$\mathcal{L}=-\Delta$. For more choices of formulating the ALE
mapping equation, we refer to
\cite{Bazilevs.Y;Takizawa.K;Tezduyar.T2013a,Donea.J;Giuliani.S;Halleux.J1982a}
and references therein.

With the ALE technique, we can reformulate the fluid momentum
equation on a moving fluid domain as follows
\begin{equation}\label{NS_ALE}
\rho_f\partial^{\mathcal A}_t
\bm{v}_f+\rho_f((\bm{v}_f-\bm{w})\cdot\nabla)\bm{v}_f=\nabla\cdot\bS_f+\bg_f,
\end{equation}
where,
$\bm{w}=\partial_t\mathcal A\circ\mathcal A^{-1}$ denotes the
velocity of fluid mesh, and

$$\partial^{\mathcal A}_t \bm{v}_f|_{(\bx,t)}:=\partial_t \bm{v}_f+(\bm{w}\cdot\nabla) \bm{v}_f$$ is called ALE
material derivative.

In particular, we define the discretization of the ALE material
derivative on the moving grid as follows
\begin{equation}\label{ALE-MOC}
\partial^{\mathcal A}_t
\bm{v}_f\approx \frac{\bm{v}_f(\bx,t_n)-\bm{v}_f(\mathcal A_{n-1}\circ
A_{n}^{-1}(\bx),t_{n-1})}{\Delta t},
\end{equation}
where, $\mathcal A_{n-1}\circ \mathcal A_{n}^{-1}(\bx)$ is the location of $\bx$ at $t=t_{n-1}$ before it is moved by ALE mesh motion.  For example, if $\cA\circ\cA_n^{-1}(\bx)$ is a mesh node at $t=t_{n-1}$, the ALE mesh motion moves it to $\bx$ at $t=t_n$, which is a mesh node at $t=t_n$.
Thus the interpolation between the meshes on different time levels
is avoided.


In order to precisely fulfill the kinematic interface condition
(\ref{interface}) and apply the Master-Slave Relation, we need to
reformulate the structure equation in terms of the structure
velocity $\hat\bv_s$ instead of its displacement $\hat\bu_s$ in
(\ref{eq:FSI_weak}). We use the following relation of $\hat\bv_s$
and $\hat\bu_s$
\begin{equation}\label{velocity_displacement}
\hat{\bm v}_s=\partial_t\hat{\bm u}_s,\, \text{ or },\, \hat{\bm
u}_s=\hat{\bm u}_s^0+\int_0^t\hat{\bm v}_s(\tau)d\tau
\end{equation}
to reformulate (\ref{eq:FSI_weak}), and rewrite the kinematic
condition (\ref{interface}) in reference configuration as
$$
\hat\bv_s({\hat\bx})=\bv_f(\cA({\hat\bx},t)),\quad \text{on}\
\hat\Gamma.
$$
Thus the Master-Slave Relation can be efficiently applied since both
master and slave DOFs are simply equal to each other at the same
mesh node on the interface. In addition, we introduce an equivalent
ALE mapping for the displacement of fluid mesh $\cA_{\bu}(\hat
\bx,t)=\cA(\hat \bx,t)-\hat\bx(\hat \bx,t)$, satisfying
\begin{eqnarray}
-\Delta \cA_{\bu}&=0, &\text{ in }\hat \Omega_f\label{eq:ALE_displacement}\\
\cA_{\bu}&=0, &\text{ on } \partial\hat\Omega_f\cap\partial\hat\Omega,\label{eq:ALE_bc1}\\
\cA_{\bu}&=\hat\bu_s, &\text{ on } \hat \Gamma.\label{eq:ALE_bc2}
\end{eqnarray}
The resulted new weak formulation is thus defined as follows with
ALE mapping, the structure velocity $\hat\bv_s$ in the structure
equation, and ALE material derivative in the fluid equation. Find
$\bv_f,\hat\bv_s, p,\cA_{\bu}$, satisfying (\ref{interface}),
(\ref{eq:DirichletBC_f}), (\ref{eq:DirichletBC_sv}) and
(\ref{eq:Initial}), such that for any given $t>0$,  the following
system of equations hold for $\forall (\hat\bphi,\bphi)\in
\mathbb{V}, q\in\mathbb{Q} ,\xi\in {\mathbf H}^1_0(\hat\Omega_f)$
\begin{equation}\label{ELALE}
\left\{
\begin{aligned}
(\hat\rho_s\partial_t\hat{\bm v}_s,\hat \bphi)_{\hat\Omega_s}+({\bm
 P}_s(\hat{\bm u}_s^0+\int_0^t\hat{\bm
v}_s(\tau)d\tau),\nabla\hat\bphi)_{\hat\Omega_s}&+(\rho_f\partial^{\mathcal A}_t{\bm v}_f,\bphi)_{\Omega_f} \\
+(\rho_f({\bm
v}_f-\bw)\cdot \nabla {\bm v}_f,\bphi)
_{\Omega_f}+({\bm \sigma}_f,\bm\epsilon(\bphi))_{\Omega_f}&=( \hat{\bg}_s,\hat{\bm\bphi})+(\bg_f,\bm\bphi),\\
(\nabla\cdot {\bm v}_f,q)_{\Omega_f}&=0,\\
(\nabla\mathcal{A}_{\bu},\nabla \xi)_{\hat\Omega_f}&=0, \\
\cA_{\bu}&=\hat\bu_s^0+\int_0^t\hat\bv_s(\tau)d\tau,\mbox{ on }\hat\Gamma.\\
\end{aligned}
\right.
\end{equation}

For the simplicity of notation, in what follows, we suppose
$\bg_f=\hat\bg_s=0$ by assuming no external body force is acted on
the fluid and structure.


\section{Remodeling of the rotational elastic structure}
\label{sec:FSIwithRotation} In this section, we attempt to develop a
new model for an elastic rotor with large rotation and small
deformation, which, simultaneously, is suitable for the
implementation of ALE method for FSI problems which involve the
rotation as well as the deformation in the structure.

From (\ref{solid-eq-new0}) we know that the equation of an elastic
structure that follows the constitutive law of STVK material and is
spinning around the axis of rotation, $\Gamma_{in}$, is specifically
defined as
\begin{equation}\label{solid-eq-new}
\hat\rho_s \partial_{tt} \hat{\bm u}_s=\nabla\cdot\bP_s=\nabla\cdot
(\lambda_s(tr{\bE}){\bm F}+2\mu_s{\bm F}{\bE})
\end{equation}
with the following boundary condition
\begin{equation}\label{eq:displacement_bc}
\hat{\bm u}_s={\bm x}-{{\hat\bx}}=(R(\theta)-{\bm
I})({{\hat\bx}}-\hat{\bm x}_0), ~~\mbox{ on }\Gamma_{in},
\end{equation}
where $R(\theta)$ is the rotational matrix depending on the angle of
rotation $\theta(t)$. For instance, let us specify a case in which
the axis of rotation is $z$-axis, then
\begin{equation}\label{rotationmatrix}
R(\theta)=\left(
 \begin{array}{rrr}
 \cos\theta&-\sin\theta&0\\
 \sin\theta&\cos\theta&0\\
 0&0&1\\
 \end{array}
 \right).
\end{equation}
For the sake of brevity, in what follows, we just use $R$ to denote
the rotational matrix.

Here we consider a hyperelastic structure such as the SVTK material
for which the deformation can be still small while a large rotation
occurs. The key idea is that we can decompose the structure motion
into the rotational part and the deformation part
\cite{Veubeke.D;Fraeijs.B1976a,Felippa.C;Haugen.B2005a}. Suppose
${{\hat\bx}}_0$ is a point on the axis of rotation,
we can decompose the trajectory of each material particle as follows
by using the rotational matrix $R$ arising from the boundary
condition (\ref{eq:displacement_bc}),
\begin{equation}\label{position}
{\bm x}-{{\hat\bx}}_0=R({{\hat\bx}}+\hat{\bm u}_{d}-{{\hat\bx}}_0),
\end{equation}
where $\hat{\bm u}_{d}$ is the local deformation displacement in the
reference configuration, it does not contain any information of the
rotation, thus $\hat{\bm u}_{d}=0$ on $\Gamma_{in}$. Hence, the
total structure displacement, $\hat{\bm u}_s$, can be decomposed as
\begin{equation}\label{displacement}
\hat{\bm u}_s={\bm x}-{{\hat\bx}}=(R-{\bm I})({{\hat\bx}}-\hat{\bm
x}_0)+R\hat{\bm u}_{d}=\hat{\bm u}_\theta+R\hat{\bm u}_{d},
\end{equation}
where $\hat{\bm u}_\theta=(R-{\bm I})({{\hat\bx}}-{{\hat\bx}}_0)$ is
referred to as the rotational part of the structure displacement.

From (\ref{position}) we obtain the deformation gradient tensor $
{\bm F}= R({\bm I}+{\mathbf H}), \text{ where } {\mathbf H}=\nabla
\hat{\bm u}_d $.
Then, the Green-Lagrangian finite strain tensor $ {\bE}=({\bm
F}^T{\bm F}-{\bm I})/2=({\mathbf H}+{\mathbf H}^T+{\mathbf
H}^T{\mathbf H})/2. $
Thus the first Piola-Kirchhoff stress in (\ref{solid-eq-new}),
$\bP_s={\bm F}(\lambda_s (tr{\bE})\bI+2\mu_s{\bE})$, leads to a
function of ${\mathbf H}$ as follows
$$
\bP_s({\mathbf H})=R({\bm
I}+{\mathbf H})\left(\frac{\lambda_s}{2}tr({\mathbf H}+{\mathbf H}^T+{\mathbf H}^T{\mathbf H})\bI+\mu_s({\mathbf H}+{\mathbf H}^T+{\mathbf H}^T{\mathbf H})\right).
$$
Since we only consider a small deformation, i.e. ${\mathbf H}\approx
0$, we then conduct a linear approximation for the above equation by
Taylor expansion around the non-deformable configuration ${\mathbf
H}=0$, resulting in
\begin{equation}
\label{eq:linearize_P} \bP_s\approx
\frac{1}{2}\lambda_sR((tr{\mathbf H})\bI+(tr{\mathbf
H}^T)\bI)+\mu_sR({\mathbf H}+{\mathbf H}^T)=R(\lambda_s
tr(\epsilon(\hat{\bm u}_d)){\bm I}+2\mu_s\epsilon(\hat{\bm u}_d)),
\end{equation}
where $\epsilon(\hat{\bm u}_d)=(\nabla \hat{\bm u}_d+(\nabla
\hat{\bm u}_d)^T)/2$ is the linear approximation of ${\bE}(\hat{\bm
u}_d)$. Therefore, we attain the linear approximation for the stress
tensor in Lagrangian description,
$$ \hat{\bm \sigma}_s=\bP_s\approx
R(\lambda_s tr(\epsilon(\hat{\bm u}_d)){\bm
I}+2\mu_s\epsilon(\hat{\bm u}_d)).
$$

We may equivalently rewrite $\hat{\bm \sigma}_s=R{\bm
D}\epsilon(\hat{\bm u}_d)$, where ${\bm
D}_{ijkl}=2\mu_s\delta_{ik}\delta_{jl}+\lambda_s\delta_{ij}\delta_{kl}$
is a fourth order tensor and ${\bm D}\epsilon(\hat{\bm u}_d)$ is the
contraction of ${\bm D}$ and $\epsilon(\hat{\bm u}_d)$, i.e. $({\bm
D}\epsilon(\hat{\bm u}_d))_{ij}=\sum_{kl}{\bm
D}_{ijkl}(\epsilon(\hat{\bm u}_d))_{kl}.$

Thus, (\ref{solid-eq-new}) is approximated by
\begin{eqnarray}
\hat\rho_s \partial_{tt} \hat{\bm u}_s =\nabla\cdot
\left(R(\lambda_s tr(\epsilon(\hat{\bm u}_d)){\bm
I}+2\mu_s\epsilon(\hat{\bm u}_d))\right)=\nabla\cdot (R{\bm
D}\epsilon(\hat{\bm u}_d)),\label{simplesolid}
\end{eqnarray}
which can be treated as a linear elasticity model of the elastic
rotor, noting that the right hand side of (\ref{simplesolid}) is
linear with respect to $\hat{\bm u}_d$.

Next, we reformulate (\ref{simplesolid}) in terms of an unique
principle unknown $\hat\bu_s$. Due to (\ref{displacement}), we have
$R\hat{\bm u}_d=\hat{\bm u}_s-\hat{\bm u}_\theta$, thus
$\epsilon(\hat{\bm u}_d)=\epsilon(R^T\hat{\bm
u}_s)-\epsilon(R^T\hat{\bm u}_\theta)$. Note that $\hat{\bm
u}_\theta=(R-{\bm I})({{\hat\bx}}-{{\hat\bx}}_0)$, then ${\bm
D}\epsilon(R^T\hat{\bm u}_\theta)={\bm
D}\epsilon((\bI-R^T)(\hat{\bm{x}}-{{\hat\bx}}_0))$. Therefore, we
attain the following equation of the elastic structure in terms of
$\hat{\bm u}_s$ only,
\begin{eqnarray}
\hat\rho_s \partial_{tt} \hat{\bm u}_s -\nabla\cdot (R{\bm
D}\epsilon(R^T\hat{\bm u}_s))&=&-\nabla\cdot (R{\bm
D}\epsilon((R^T-\bI)(\hat{\bm x}_0-\hat{\bm
x}))).\label{simplesolid_us}
\end{eqnarray}

It is assumed that the structure only rotates on the axis of
rotation, i.e., the inner boundary $\Gamma_{in}$. Then, with a given
rotational matrix $R$, the admissible solution set for $\hat{\bm
u}_s$ is defined as follows
$${\mathbf H}_{R}^1(\hat\Omega_s)=\{\hat\bu \in {\mathbf H}^1(\hat\Omega_s)| \hat\bu(\hat{\bm x})=(R-\bI)(\hat{\bm x}-\hat{\bm x}_0), ~\hat{\bm x}\in \Gamma_{in} \}.$$
By (\ref{displacement}), $\hat\bu\in {\mathbf
H}_{R}^1(\hat\Omega_s)$ can be decomposed into the rotational part
$\hat{\bm u}_\theta=(R-{\bm I})({{\hat\bx}}-{{\hat\bx}}_0)$ and the
deformation part $\hat\bu_d\in {\mathbf H}^1_0(\hat\Omega_s)$ such
that
\begin{equation}
\label{eq:decomposition} \hat\bu=R\hat\bu_d+(R-\bI)(\hat{\bm
x}-\hat{\bm x}_0).
\end{equation}
It is obvious that given any $\hat\bu\in {\mathbf
H}_{R}^1(\hat\Omega_s)$, there exists a unique  $\hat\bu_d\in
{\mathbf H}_0^1(\hat\Omega_s)$ such that (\ref{eq:decomposition})
holds.

Assume that $R$ is given for $t\in[0,T]$, then the weak formulation
of (\ref{simplesolid_us}) can be defined as follows, $\forall t\in
[0,T]$, find $\hat{\bm u}_s(t)\in {\mathbf H}_{R}^1(\hat\Omega_s)$
such that $\forall \hat\bphi\in ({H}_0^1(\hat\Omega_s))^d$
\begin{equation}
\noindent(\hat\rho_s \partial_{tt} \hat{\bm
u}_s,\hat\bphi)_{\hat\Omega_s}+ ({\bm D}\epsilon(R^T\hat{\bm
u}_s),\epsilon(R^T\hat\bphi))_{\hat\Omega_s}=({\bm
D}\epsilon((\bI-R^T)({{\hat\bx}}-\hat{\bm
x}_0)),\epsilon(R^T\hat\bphi))_{\hat\Omega_s}
+\int_{\hat\Gamma}\bg_{inf}\hat\bphi\ ds ,\label{newweakform}
\end{equation}
where, the stiffness term on the left hand side of
(\ref{newweakform}) is symmetric positive definite, and $\bg_{inf}$
on the right hand side is the interface force from fluid, which will
be canceled in the weak form of FSI due to the continuity of normal
stress on the interface of fluid and structure.
We conclude that the weak formulation (\ref{newweakform}) can also
be derived based on the linear elasticity equation on the rotational
configuration, as demonstrated at below.

Let $\hat\Omega$ be the reference configuration of elasticity.
Suppose the material undergoes a large rotation along with a small
deformation.  At time $t$, the rotational matrix is given as $R$.
Without loss of generality, let $\hat\bx_0=0$. Define
$$\hat\Omega_R:=\{\hat \bx_R=R\hat \bx, \hat \bx\in \hat\Omega\},$$
which is the rotational configuration. It is known that when
formulating the equation in $\hat\Omega_R$, we can just use the
linear elasticity. When formulating the equation on $\hat\Omega$,
however, the linearization is more complicated. Now we want to show
that we can obtain the same equation by formulating it in different
configurations.

Recall the decomposition of displacement in $\hat\Omega$:
$$\hat\bu_s=(R-\bI)\hat\bx+R\hat\bu_d=\hat\bu_\theta+\hat\bu_R,$$ where,
$\hat\bu_R:=R\hat\bu_d$ is the unknown of the linear elasticity
equation defined in $\hat\Omega_R$. Note that we use the subscript
$R$ for the variables defined in $\hat\Omega_R$, and subscript free
for those variables defined in $\hat\Omega$. To simplify the
notation, we do not distinguish between $f(\hat \bx)$ and $f(R\hat
\bx)$.

First, we look at the stress term in the weak form,
$$
\begin{aligned}
&\left(2\mu_s\epsilon_R(\hat\bu_R)+\lambda_s tr(\epsilon_R(\hat\bu_R))\bI, \nabla_R \bphi\right)_{\hat\Omega_R}\\
=&\left(\mu_s(\nabla_R\hat\bu_R+(\nabla_R\hat\bu_R)^T)+\lambda_s tr(\nabla_R\hat\bu_R+(\nabla_R\hat\bu_R)^T)\bI/2, \nabla_R \bphi\right)_{\hat\Omega_R}\\
=&\left(\mu_s(\nabla \hat\bu_RR^T+(\nabla \hat\bu_RR^T)^T)+\lambda_s tr(\nabla \hat\bu_RR^T+(\nabla \hat\bu_RR^T)^T)\bI/2, \nabla \bphi R^T\right)_{\hat\Omega}\\
=&\left(\mu_s(\nabla \hat\bu_R+R(\nabla \hat\bu_R)^TR)+\lambda_s tr(R^T\nabla \hat\bu_R+(R^T\nabla \hat\bu_R)^T)R/2, \nabla \bphi \right)_{\hat\Omega}\\
=&\left(R[\mu_s(\nabla(R^T \hat\bu_R)+(\nabla (R^T\hat\bu_R))^T)+\lambda_s tr(\nabla (R^T\hat\bu_R)+(\nabla (R^T\hat\bu_R))^T)\bI/2], \nabla \bphi \right)_{\hat\Omega}\\
=&\left(R[2\mu\epsilon(\hat\bu_d)+\lambda_s tr(\epsilon(\hat\bu_d))\bI], \nabla \bphi \right)_{\hat\Omega}.\\
\end{aligned}
$$
This stress term is the same as that shown in (\ref{simplesolid}).
Meanwhile, we have the following inertia term in the rotational
configuration
$$(\hat\rho_s \partial_{tt}\hat\bu_R,\bphi)_{\Omega_R}.$$
Because the rotational configuration is a non-inertial reference
frame, for the sake of force equilibrium, the following centrifugal
force shall be added to the equation
$$(\hat\rho_s\partial_{tt}\hat\bu_{\theta},\bphi)_{\Omega_R}.$$
Thus we can recover the inertia term in (\ref{newweakform}) given
that
$$(\hat\rho_s \partial_{tt}\hat\bu_R,\bphi)_{\Omega_R}+(\hat\rho_s\partial_{tt}\hat\bu_{\theta},\bphi)_{\Omega_R}= (\hat\rho_s\partial_{tt}\hat\bu_{s},\bphi)_{\hat\Omega}.$$
We have, therefore, shown the equivalence with the derivation of the
equation (\ref{newweakform}).

After updating the structure equation with the linear elasticity
involving the rotation, we have the following new weak formulation
of the FSI system with a rotational elastic structure. Find
$\bv_f,\hat\bv_s, p,\cA_{\bu}$, satisfying (\ref{interface}),
(\ref{eq:DirichletBC_f}), (\ref{eq:DirichletBC_sv}), and
(\ref{eq:Initial}), such that for any given $t>0$,  the following
equations hold for $\forall (\hat\bphi,\bphi)\in \mathbb{V},
q\in\mathbb{Q} ,\xi\in  {\mathbf H}^1_0(\hat\Omega_f)$
\begin{equation}\label{ELALE1}
\left\{
\begin{aligned}
(\hat\rho_s\partial_t\hat{\bm v}_s,\hat \bphi)_{\hat\Omega_s}+({\bm
 D}\epsilon(R^T(\hat{\bm u}_s^0+\int_0^t\hat{\bm
v}_s(\tau)d\tau)),\epsilon(R^T\hat\bphi))_{\hat\Omega_s}&+(\rho_f\partial^{\mathcal A}_t{\bm v}_f,\bphi)_{\Omega_f} \\
+(\rho_f({\bm v}_f-\bw)\cdot \nabla {\bm v}_f,\bphi)
_{\Omega_f}+({\bm \sigma}_f,\bm\epsilon(\bphi))_{\Omega_f}&=({\bm
D}\epsilon((\bI-R^T)({{\hat\bx}}-\hat{\bm
x}_0)),\epsilon(R^T\hat\bphi))_{\hat\Omega_s},\\
(\nabla\cdot {\bm v}_f,q)_{\Omega_f}&=0,\\
(\nabla\mathcal{A}_{\bu},\nabla \xi)_{\hat\Omega_f}&=0, \\
\cA_{\bu}&=\hat\bu_s^0+\int_0^t\hat\bv_s(\tau)d\tau,\mbox{ on }\hat\Gamma.\\
\end{aligned}
\right.
\end{equation}

\section{The modified ALE method for an elastic rotor immersed in the fluid}\label{sec:newALE}
In this section, we develop a modified ALE method to deal with the
elastic rotor that is immersed in a non-axisymmetric fluid domain.
Roughly speaking, we first introduce an artificial cylindrical
buffer zone $\Omega_{rf}$ in the fluid domain which embraces the
elastic rotor $\Omega_{s}$ inside and rotates together on the same
axis of rotation and with the same angular velocity, and name the
remaining region in $\Omega_f$ as the stationary fluid subdomain,
denoted by $\Omega_{sf}$, thus
$\Omega_f=\Omega_{sf}\cup\Omega_{rf}$. Denote the interface between
$\Omega_{rf}$ and $\Omega_{sf}$ by $\Gamma_{rs}$, i.e.,
$\Gamma_{rs}=\partial\Omega_{rf}\cap\partial\Omega_{sf}$. On the
other hand, we measure the relative motion of each pair of mesh
nodes on $\Gamma_{rs}$, and add up the difference of motion to the
mesh nodes on $\partial\Omega_{rf}$. Thus, the mesh in
$\bar\Omega_{rf}$ needs to be updated all the time in the numerical
computation in order to conform with the mesh in $\Omega_{s}$
through the interface of $\Omega_{rf}$ and $\Omega_{s}$, $\Gamma$,
and with the mesh in $\Omega_{sf}$ through the interface
$\Gamma_{rs}$ , as shown in Fig. \ref{fig:buffer_fsi_domain}.
\begin{figure}[htbp]
\begin{center}
\includegraphics[width=0.4\textwidth]{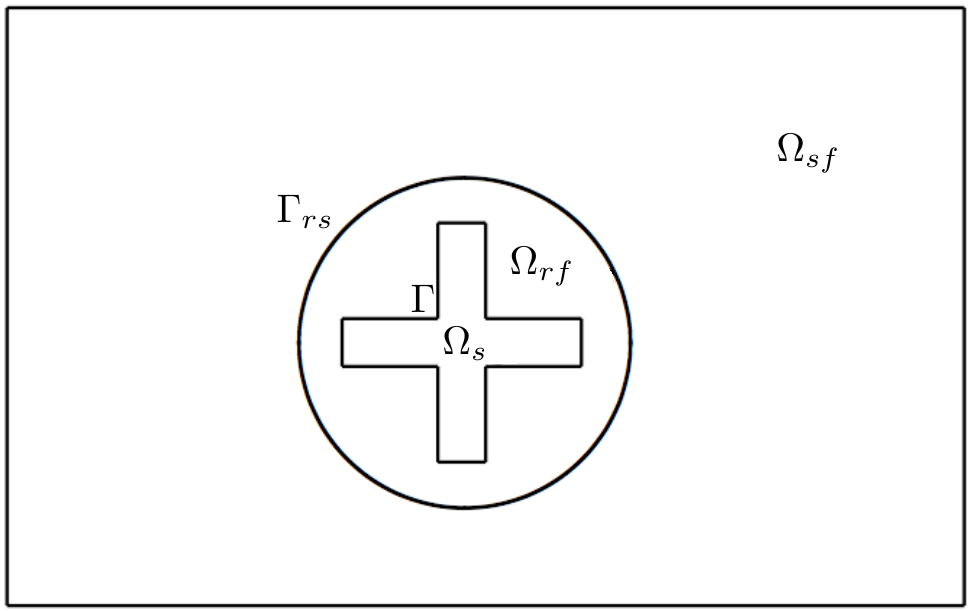}
\end{center}
\caption{The cylindrical buffer zone $\Omega_{rf}$ that embraces an
elastic rotor $\Omega_{s}$} \label{fig:buffer_fsi_domain}
\end{figure}

For the sake of brevity, we use a 2D example to show the idea of how
to use ALE method to generate the fluid mesh in fluid domain
$\Omega_{f}$. Different from the traditional ALE approach, the
modified ALE method only moves the mesh nodes in the rotational
fluid buffer zone $\Omega_{rf}$. We handle the fluid mesh motion in
a similar fashion as we do for the structure, i.e., decomposing the
displacement to the rotational part and the deformation part:
$\hat\bu_s=\hat\bu_\theta+R\hat\bu_d$. In particular, we decompose
$\cA_{\bu}$, the displacement of the rotational fluid mesh defined
in $\hat\Omega_{rf}$, into two parts: the rotational part
$\hat\bu_\theta$ and the deformation part $\cA_D$, i.e., $$\cA_{\bu}
= \hat\bu_\theta + \cA_D,$$ where, $\hat{\bm u}_\theta=(R-{\bm
I})({{\hat\bx}}-{{\hat\bx}}_0)$ is the rotational displacement of
the mesh which is determined by the given rotational matrix, $R$,
defined on the axis of rotation $\Gamma_{in}$. In addition, we also
need the rotational fluid mesh to be conforming with the rotational
and deformable structure mesh from inside, and with the stationary
fluid mesh from outside. The deformation displacement, $\cA_D$,
serves on this purpose by moving the mesh according to the structure
deformation displacement, $\hat\bu_s-\hat\bu_\theta$, on the
interface $\hat\Gamma$, and by locally moving the mesh nodes on the
interface
$\hat\Gamma_{rs}:=\partial\hat\Omega_{rf}\cap\partial\hat\Omega_{sf}$
to properly match with the stationary fluid mesh.
%
%
Namely, we let $\cA_D$ satisfy the following modified ALE mapping
 \begin{equation}\label{ALEmappingLap}
 \left\{
 \begin{aligned}
 -\Delta \mathcal{A}_D&=0, &\mbox{in }\hat\Omega_{rf},\\
 \mathcal{A}_D&= \hat{\bm u}_{s}-\hat{\bm u}_{\theta},& \mbox{on } \hat\Gamma,\\
 \mathcal{A}_D&= \hat{\bm u}_{m},& \mbox{on } \hat\Gamma_{rs},\\
 \end{aligned}
 \right.
 \end{equation}
where, $\hat{\bm u}_m$ needs to be defined such that each sliding
mesh node on $\hat\Gamma_{rs}$ from the side of $\hat\Omega_{rf}$
matches with a fixed mesh node on $\hat\Gamma_{rs}$ from the other
side of $\hat\Omega_{sf}$ by locally moving for such amount of
displacement $\hat{\bm u}_m$ on $\hat\Gamma_{rs}$.

Now using Fig. \ref{fig:interface_match} we illustrate how to find
such small displacement $\hat\bu_m$ in two dimensional case. The
interface is locally shown as a straight line just for an
illustration purpose. In practice, the curved interface is
approximated by piecewise line segments in 2D or piecewise
triangular slice in 3D. The technique shown here is applicable to
the general cases with piecewise line segments/triangular slices.

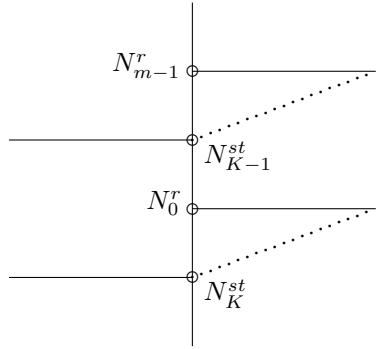
\begin{figure}[h]
\setlength{\unitlength}{0.12in} 
\centering
\begin{picture}(30,16) 
\put(10,0){\line(0,1){15}}
\put(10,3){\line(-1,0){8}}
\put(10,6){\line(1,0){8}}
\put(10,9){\line(-1,0){8}}
\put(10,12){\line(1,0){8}}

\put(10,3){\circle{0.5}}
\put(10,6){\circle{0.5}}
\put(10,9){\circle{0.5}}
\put(10,12){\circle{0.5}}

\put(10.5,2){$N_K^{st}$}
\put(8,6){$N_0^r$}
\put(10.5,8){$N_{K-1}^{st}$}
\put(6.5,12){$N_{m-1}^r$}

\multiput(10 , 3)( 0.4,  0.15) {20} {\circle*{0.1}}
\multiput(10 , 9)( 0.4,  0.15) {20} {\circle*{0.1}}

%
%
%
%
\end{picture}
\caption{Re-matching grid points on the interface of $\Omega_{rf}$}
 \label{fig:interface_match} 
 \end{figure}
Let $\{N_i^r\}_{i=0}^{m-1}$ be coordinates of boundary nodes from
$\partial\Omega_{rf}$ and $\{N_i^{st}\}_{i=0}^{m-1}$ the counterpart
from $\partial\Omega_{sf}$.  After  $\Omega_{rf}$ is rotated, we
assume $N_0^{r}$ falls in between $N_{K-1}^{st}$ and $N_{K}^{st}$.
Then we can choose the following mapping between the nodes:

 $$N_i^r\mapsto N_{[K+i]_m}^{st}.$$

 Here $[m]_n$ denotes the modulo operation. $\hat\bu_m$ can be correspondingly defined on $\{N_i^{r}\}$:
 $$\hat\bu_m(x)= N_{[K+i]_m}^{st}- N_i^r,\quad\mbox{if } x=N_i^r.$$

On $\hat\Gamma\backslash\{N_i^r\}$, $\hat\bu_m$ is defined by linear
interpolation of the nodal values. Since each boundary node from
$\partial\Omega_{rf}$ has a nearby node from $\partial\Omega_{sf}$
to match, the magnitude of  $\hat\bu_m$ is  bounded by mesh
diameter. Therefore it will not cause problems for ALE mapping. Note
that it is also possible to match $N_i^r$ with $N^{st}_{[K+i-1]_m}$.
The choice depends on practical considerations like mesh quality.

Via this approach, the ALE mapping preserves the mesh quality, and,
under the mapping,  the rotational fluid mesh still conforms with
the stationary fluid mesh in $\Omega_{sf}$ through $\Gamma_{rs}$,
and with the rotational structure mesh in $\Omega_{s}$ through
$\Gamma$. The modified ALE method we propose here moves the mesh in
the entire rotational fluid domain $\Omega_{rf}$ to accommodate the
displacement of mesh through the interfaces, thus a better mesh
quality can be attained in a relatively global fashion in
$\Omega_{rf}$ in contrast with the shear-slip method which only
moves the mesh within the shear-slip layer
\cite{Behr.M;Tezduyar.T1999a,Behr.M;Tezduyar.T2001a}.

\begin{remark}
We cannot simply match each of the nodes from $\partial\Omega_{rf}$
with their nearest node from $\partial\Omega_{sf}$ since it may
cause hanging nodes and degenerate elements. We need to move all the
interface nodes clockwise uniformly, or counterclockwise uniformly.
\end{remark}

\begin{remark}
In 3D, the treatment needed for the rotational fluid mesh is much
more sophisticated.  In order to let it rotate, the buffer zone is
chosen as a cylinder when viewed from outside. On the top and bottom
of this cylindrical fluid domain, a natural analogue of two
dimensional case is to generate a mesh that is radially symmetric
about the axis of rotation, thus it can match with the meshes in the
stationary fluid domain by a similar way shown in Figure
\ref{fig:interface_match}. However, the radially symmetric meshes
result in higher aspect ratios of the meshes near the axis of
rotation (See Figure \ref{fig:rotationalFSI1}). Without the radial
symmetry, it may be difficult to let the meshes match on the
interface. This dilemma was addressed in
\cite{Behr.M;Tezduyar.T2001a}.
\end{remark}

\section{Numerical discretizations}
\label{sec:ALEdiscrete} In this section we introduce a full
discretizations for the weak forms of  FSI model. We discretize the
temporal derivative with the characteristic finite difference scheme
(\ref{ALE-MOC}) based upon the moving mesh and on the time interval
$[0,T]$, which is divided into
$$0=t_0<t_1<...<t_N=T.$$ In the space dimension, we adopt the mixed finite element
method to discretize the saddle-point problem (\ref{ELALE1}), where,
we denote $\mathbb{V}_h\subset \mathbb{V}$, $\mathbb{Q}_h\subset
\mathbb{Q}$ and $\mathbb{W}_h\subset H_0^1(\hat\Omega_f)$ for the
finite element spaces of velocity, pressure and the fluid mesh
displacement $\cA_{\bu}$, respectively.

\subsection{A monolithic mixed finite element approximation}
Suppose all the necessary solution data from the last time step,
${\bm v}_f^{n-1}$, $\hat{\bm v}_{s}^{n-1},\ \hat{\bm u}_s^{n-1}$,
$w^{n-1}$, $\mathcal{A}^{n-1}$, are known, and so is the mesh on the
last time step $\mathbb{T}_h^{n-1}$, given as
$\mathbb{T}_h^{n-1}=\mathbb{T}_{sf,h}\cup\mathbb{T}_{rf,h}^{n-1}\cup\hat{\mathbb{T}}_{s,h}$,
where $\hat{\mathbb{T}}_{s,h}$ is the Lagrangian structure mesh in
$\hat\Omega_{s}$ which is always fixed, $\mathbb{T}_{sf,h}$ is the
mesh in the stationary fluid domain $\Omega_{sf}$ which is also
fixed, and $\mathbb{T}_{rf,h}$ is the mesh in the rotational fluid
domain $\Omega_{rf}$ which needs to be computed all the time. In
addition, we discretize (\ref{velocity_displacement}) in the time
interval $[t_n,t_{n+1}]$ with the trapezoidal quadrature rule, as
\begin{equation}\label{u2vtrapzoid}
\hat\bu_s^{n}=\hat\bu_s^{n-1}+\frac{\Delta
t}{2}(\hat\bv_s^n+\hat\bv_s^{n-1}).
\end{equation}
Then a monolithic mixed finite element discretization within a
fixed-point iteration for both fluid and structure equations can be
defined as follows. Let $\bw^{n,0}=\bw^{n-1}$,
$\mathbb{T}_{f,h}^{n,0}=\mathbb{T}_{f,h}^{n-1}$, find ${\bm
v}_f^{n,j},\ \hat{\bm v}_s^{n,j},\ p^{n,j},\cA_{\bu}^{n,j}\ (j=1,\
2,\cdots \text{until convergence})$, satisfying (\ref{interface}),
(\ref{eq:DirichletBC_f}), (\ref{eq:DirichletBC_sv}),
(\ref{eq:Initial}), (\ref{eq:ALE_bc1}) and (\ref{eq:ALE_bc2}),
$\forall (\bphi,\hat\bphi)\in \mathbb{V}_h, q\in\mathbb{Q}_h ,\xi\in
\mathbb{W}_h$, such that
\begin{equation}\label{rotationFSI}
\left\{
\begin{array}{l}
\left(\rho_f\frac{{\bm v}_f^{n,j}-{\bm v}_f^{n-1}}{\Delta
t},\bphi\right)_{\Omega_f^{n,j-1}} +\left(\rho_f({\bm
v}_f^{n,j}-\bw^{n,j-1})\cdot \nabla {\bm
v}_f^{n,j},\bphi\right)_{\Omega_f^{n,j-1}}
+\left(\mu_f\epsilon({\bm v}_f^{n,j}),\epsilon(\bphi)\right)_{\Omega_f^{n,j-1}}\\
-\left(p^{n,j},\nabla\cdot\bphi\right)_{\Omega_f^{n,j-1}}+\left(\rho_s\frac{\hat{\bm
v}_s^{n,j}-\hat{\bm v}_s^{n-1}}{\Delta
t},\hat\bphi\right)_{\hat\Omega_s}+\frac{\Delta
t}{2}\left({\bm D}\epsilon((R^{n})^T\hat{\bm v}_s^{n,j}),\epsilon((R^{n})^T\hat\bphi)\right)_{\hat\Omega_s}=\\
-\frac{\Delta t}{2}\left({\bm D}\epsilon((R^{n})^T\hat{\bm
v}_s^{n-1}),\epsilon((R^{n})^T\hat\bphi)\right)_{\hat\Omega_s}
-\left({\bm D}\epsilon((R^{n})^T\hat{\bm u}_s^{n-1}),\epsilon((R^{n})^T\hat\bphi)\right)_{\hat\Omega_s}\\
+\left({\bm D}\epsilon((I-(R^{n})^T)({{\hat\bx}}-{{\hat\bx}}_0)),\epsilon((R^{n})^T\hat\bphi)\right)_{\hat\Omega_s},\\
\left(\nabla\cdot {\bm v}_f^{n,j},q\right)_{\Omega_f^{n,j-1}}=0,\\
(\nabla\mathcal{A}_{\bu}^{n,j},\nabla \xi)_{\hat\Omega_f}=0,\\
\bw^{n,j}=\frac{\cA_{\bu}^{n,j}-\cA_{\bu}^{n-1}}{\Delta t},\quad
\mathbb{T}_{f,h}^{n,j}=\mathbb{T}_{f,h}^{n-1}+\cA_{\bu}^{n,j}.
\end{array}
\right.
\end{equation}
where, $R^{n}$ is the rotational matrix at time $t_n$ given on the
axis of rotation $\Gamma_{in}$.
To obtain a higher mesh quality from the ALE mapping, the practical
experiences show that a linear anisotropic elasticity equation can
produce more shape-regular mesh with high quality than the harmonic
mapping (\ref{eq:ALE_displacement}) since we can increase the
stiffness of small elements to prevent them from being distorted
\cite{Masud.A1993}.

The only nonlinear term in (\ref{rotationFSI}) is the convection
term $(\rho_f{\bm v}_f^{n,j}\cdot \nabla {\bm v}_f^{n,j},\bphi)$ in
the first equation, which can be linearized by using Newton's method
as
\begin{eqnarray}
(\rho_f{\bm v}_f^{n,j}\cdot \nabla {\bm
v}_f^{n,j},\bphi)\leftarrow(\rho_f{\bm v}_f^{n,j,k-1}\cdot \nabla
{\bm v}_f^{n,j,k},\bphi)+(\rho_f{\bm v}_f^{n,j,k}\cdot \nabla
{\bm v}_f^{n,j,k-1},\bphi)\notag\\
-(\rho_f{\bm v}_f^{n,j,k-1}\cdot \nabla {\bm
v}_f^{n,j,k-1},\bphi),\text{ for } k=1,2, \cdots \text{ until
convergence}. \notag
\end{eqnarray}

If we employ $P_1$-$P_1$ type mixed finite elements to discretize
(\ref{rotationFSI}), then the pressure stabilization term, $\delta
h^2\left(\nabla p^{n,j}, \nabla q\right)$, which was originally
derived from the Galerkin/least-square scheme
\cite{Tezduyar.T1992,Brezzi.F;Pitkaranta.J1984,Hughes.T;Franca.L;Balestra.M1986},
needs to be added to the second equation of (\ref{rotationFSI}),
i.e. the mass equation, in order to obtain a stable solution of
velocity and pressure. The stabilization parameter $\delta h^2$ can
be chosen as $\frac{\delta_0 h^2}{\mu_f}$, and $0<\delta_0<1$ is an
appropriately tuned parameter.
%
Comparing with other stable mixed elements such as $P_2$-$P_1$
(Taylor-Hood) element, $P_1$-$P_1$ element with pressure
stabilization can save a great deal of computational cost with an
acceptable accuracy, especially for the realistic three-dimensional
FSI problems.

In addition, if the Reynolds number of fluid is large, then the
convection term in the momentum equation of fluid turns out to be
dominating, the streamline-upwind/Petrov-Galerkin scheme
\cite{Johanson.C;Navert.U1984} may be used to achieve a stable and
convergent solution, namely, an extra term as follows needs to be
added to the momentum equation
$$
\frac{\delta_{SUPG} h}{\|{\bm
v}_f^{n,j}-\bw^{n,j-1}\|_\infty}\left(({\bm
v}_f^{n,j}-\bw^{n,j-1})\cdot \nabla {\bm v}_f^{n,j},({\bm
v}_f^{n,j}-\bw^{n,j-1})\cdot \nabla \bphi\right)
$$
with an appropriately tuned parameter $\delta_{SUPG}$.
\subsection{Well-posedness of the discrete linear system}
If Stokes-stable finite element pairs are used to discretize the
velocity and pressure, then a robust block preconditioners can be
developed following the approach in \cite{Xu.J;Yang.K2015a}. We consider the linearization of (\ref{rotationFSI}) and use the notation convention from
\cite{Xu.J;Yang.K2015a}, where $\bv=(\bv_f,\hat \bv_s)$; namely, the
velocity has two component: the fluid velocity in Eulerian
coordinates and structure velocity in Lagrangian coordinates.  We
assume that the discretized function space $\mathbb{V}_h$ and
$\mathbb{Q}_h$ are Stokes stable.   For the brevity, we also assume
that the stationary fluid domain vanishes, i.e. all of the fluid
domain is the rotational fluid domain. The technique to be shown
here can be applied to the general cases without any essential 
difficulty.

We consider the linearization of the fluid-structure equations of  (\ref{rotationFSI}). Note that the ALE mesh motion is not included in the equations under consideration.  Define
\begin{equation*}
\begin{aligned}
a(\bv,\bphi)=&\frac{1}{\Delta t}(\rho_f\bv_f,\bphi_f)_{\Omega_f}+\frac{1}{\Delta t}(\hat\rho_s\hat\bv_s,\hat\bphi_s)_{\hat\Omega_s}+(\mu_f\epsilon (\bv_f),\epsilon ( \bphi_f))_{\Omega_f}\\
&+\Delta t(D\epsilon
(R^T\hat\bv_s),\epsilon(R^T\hat\bphi_s))_{\hat\Omega_s},\\
c(\bv,\bphi)=& \int_{\Omega_f}\rho_f(\bw\cdot\nabla)\bv_f\cdot\bphi_f+\int_{\Omega_f}\rho_f(\bv_f\cdot\nabla)\bz\cdot\bphi_f,
\end{aligned}
\end{equation*}
 and
$$b(\bv,p)=(\nabla\cdot \bv_f,p)_{\Omega_f}.$$
$a(\cdot,\cdot)$ are the symmetric terms on the velocity field.  $c(\cdot,\cdot)$ are nonsymmetric terms due to linearization. $\bw$ and $\bz$ are given functions based on previous iteration steps. 
Then the momentum and continuity equations in (\ref{rotationFSI}) can be
formulated as follows:

 Find $\bv\in \mathbb{V}_h$ and  $p\in
\mathbb{Q}_h$ such that
\begin{equation}
\label{eq:saddle}
\left\{
\begin{aligned}
&\tilde a(\bv,\bphi)+b(\bphi,p)&=&( \tilde \bg,\bphi),&\forall& \bphi\in \mathbb{V}_h,\\
&b(\bv,q) &=&0,&\forall& q\in \mathbb{Q}_h,\\
\end{aligned}
\right.
\end{equation}
where $\tilde a(\bv,\bphi) = a(\bv,\bphi)+c(\bv,\bphi)$ and $\tilde \bg$ denotes the summation of all the known terms after linearization in
the first equation of (\ref{rotationFSI}).  Note that we rescale the pressure by multiplying $-1$ such that the problem can be formulated as a symmetric saddle point problem. This variational problem
is very similar to those studied in \cite{Xu.J;Yang.K2015a}. The
only difference is the elastic stress term, where $R$ is added due to the
rotation of the structure. However, the well-posedness can still be
proved in a similar way.

It is well-known that (\ref{eq:saddle}) is well-posed if the
 following conditions can be verified \cite{Girault.V;Raviart.P1986a}

\begin{itemize}
\item
\begin{equation}
\label{eq:brezzi_a}
\tilde a(\cdot,\cdot) \mbox{ is bounded and coercive in } \mathbb{Z}_h:=\{\bv\in\mathbb{V}_h| (\nabla\cdot\bv, q)=0, \forall q\in \mathbb{Q}_h\},
\end{equation}
 \item
 \begin{equation}
 \label{eq:brezzi_b}
 \begin{aligned}
 b(\cdot,\cdot)  \mbox{ is bounded }&\mbox{and satisfies the inf-sup condition } \\
 &\inf_{p\in \mathbb{Q}_h}\sup_{\bv\in \mathbb{V}_h}\frac{b(\bv,p)}{\|\bv\|_V\|p\|_Q}\geq \beta>0.\\
 \end{aligned}
 \end{equation}
\end{itemize}
Here $\|\cdot\|_V$ and $\|\cdot\|_Q$ are given norms of
$\mathbb{V}_h$ and $\mathbb{Q}_h$, respectively. Similar to
\cite{Xu.J;Yang.K2015a}, we define
\begin{equation*}
\begin{aligned}
\|\bv\|_V^2&:=a(\bv,\bv)+r\|\mathcal{P}_h\nabla\cdot\bv_f\|_{0,\Omega_f}^2,\quad \forall \bv\in \mathbb{V}_h,\\
\|q\|^2_Q&:=r^{-1}\|q\|^2_0,\quad \forall q\in\mathbb{Q}_h,\\
\end{aligned}
\end{equation*}
where, $\mathcal{P}_h$ represents the $L^2$ projection from
$\mathbb{Q}$ to $\mathbb{Q}_h$, and  $r=\max\{1,\mu_f, \rho_f{\Delta t}^{-1}, \hat\rho_s{\Delta t}^{-1}, {\Delta t}\mu_s,{\Delta t}\lambda_s\}$.

First, we have the following lemma from \cite{Xu.J;Yang.K2015a}. Let
$\mathbb{V}_{h,f}$ be the discrete fluid velocity space, and
$\bx(\hat\bx,t)$ denotes the fluid mesh motion. In fact,
$\bx(\hat\bx,t)$ is the total mesh displacement of the fluid
computed by ALE mapping: $\cA_{\bu}=\hat \bu_\theta+\cA_D$.
\begin{lemma}
\label{lm:infsup_b_fem} Assume that $\bx(\hat\bx,t)$ is continuous
and satisfies
$$
\bx(\hat\bx,t)|_{\tau}\in \mathcal{P}_1, ~~\forall \tau\in
T_h(\hat\Omega_s)~~\mbox{ and } \inf_{\hat\bx\in \hat\Omega_s}\det
(\nabla_{\hat\bx} \bx(\hat\bx,t)) >0,$$ and that the finite element
pair $(\mathbb{V}_{h,f},\mathbb{Q}_h)$ for the fluid variables
satisfies that
 \begin{equation}
 \label{eq:infsup_f}
 \inf_{q\in\mathbb{Q}_h}\sup_{\bv_f\in
\mathbb{V}_{h,f}}\frac{(\nabla\cdot
\bv_f,q)_{\Omega_f}}{\|\bv_f\|_1\|q\|_0}\geq \beta >0.
 \end{equation}
  Then the following inf-sup condition holds
\begin{equation}
\label{eq:infsup_b_fem} \inf_{q\in\mathbb{Q}_h}\sup_{\bv\in
\mathbb{V}_h}\frac{b( \bv,q)}{\|\bv\|_1\|q\|_0}\geq
\frac{\beta}{\alpha_0^{d/2+1}\alpha_1},
\end{equation}
where
\begin{equation}
\label{eq:d0d1} \alpha_0=\max\left\{\sup_{\hat\bx\in
\hat\Gamma}\|(\nabla_{\hat\bx} \bx(\hat\bx,t))\|_{2},1\right\},\quad
\alpha_1=\max\left\{\sup_{\hat\bx\in
\hat\Gamma}\left\{\det(\nabla_{\hat\bx}
\bx(\hat\bx,t))^{-1}\right\}, 1\right\}.
\end{equation}
\end{lemma}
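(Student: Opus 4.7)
The plan is to reduce the coupled inf-sup to the assumed fluid-only inf-sup (\ref{eq:infsup_f}) by constructing, for each given fluid test velocity, a compatible structure velocity with a controlled $H^1$-norm. Given $q\in \mathbb{Q}_h$, I first invoke (\ref{eq:infsup_f}) to pick $\bv_f\in \mathbb{V}_{h,f}$ with
\[
(\nabla\cdot \bv_f,q)_{\Omega_f}\ \ge\ \beta\,\|\bv_f\|_1\,\|q\|_0.
\]
This $\bv_f$ is not a priori compatible with any structure velocity on the interface, so I must extend it to an admissible pair $(\bv_f,\hat\bv_s)\in\mathbb{V}_h$ obeying $\bv_f\circ\bx(\hat\bx,t)=\hat\bv_s$ on $\hat\Gamma$. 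Because the form $b(\cdot,q)$ only sees $\bv_f$, the numerator of the inf-sup ratio is left unchanged by the extension, and the entire task reduces to producing such an $\hat\bv_s$ with $\|\hat\bv_s\|_{1,\hat\Omega_s}\le C\alpha_0^{d/2+1}\alpha_1\|\bv_f\|_{1,\Omega_f}$.

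For the construction, I set the interface data $g:=\bv_f\circ\bx(\cdot,t)$ on $\hat\Gamma$. Under the assumed piecewise linearity and orientation preservation of $\bx(\hat\bx,t)$, this $g$ is a piecewise polynomial on $\hat\Gamma$ lying in the trace space of the discrete structure velocity space, so a standard discrete harmonic (or Scott--Zhang type) lifting into the fixed reference domain $\hat\Omega_s$ produces $\hat\bv_s$ with $\hat\bv_s|_{\hat\Gamma}=g$ and the stability estimate
\[
\|\hat\bv_s\|_{1,\hat\Omega_s}\ \lesssim\ \|g\|_{1/2,\hat\Gamma}.
\]
A change of variables $\bx=\bx(\hat\bx,t)$ then converts the right-hand side back into an Eulerian norm of $\bv_f$ on $\Omega_f$. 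The chain rule $\nabla_{\hat\bx}(\bv_f\circ\bx)=(\nabla_\bx\bv_f)\,\nabla_{\hat\bx}\bx$ contributes factors bounded by $\alpha_0$, while the measure change contributes $|\det\nabla_{\hat\bx}\bx|^{-1}\le \alpha_1$. Assembling these with the trace inequality and substituting $\bv=(\bv_f,\hat\bv_s)$ into the Brezzi ratio yields the desired lower bound $\beta/(\alpha_0^{d/2+1}\alpha_1)$.

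The hardest step will be tracking the exact exponents of $\alpha_0$ and $\alpha_1$ through the trace--extension chain. A naive application of the change of variables to the full $H^1$-norm already gives $\alpha_0\alpha_1^{1/2}$; the extra $\alpha_0^{d/2}\alpha_1^{1/2}$ arises from the elementwise scaling of the $L^2$ and $H^{1/2}$ contributions on the layer of elements adjacent to $\hat\Gamma$ after they are distorted by the ALE map. Controlling this sharply is exactly where the hypothesis that $\bx(\cdot,t)|_\tau\in\mathcal{P}_1$ is used: $\nabla_{\hat\bx}\bx$ is then elementwise constant, so the worst-case distortion of each boundary element is captured by the interface suprema in the definitions (\ref{eq:d0d1}) of $\alpha_0$ and $\alpha_1$. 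Once the trace-extension constant has been written in this explicit form, the remaining computation is routine and parallels the argument in \cite{Xu.J;Yang.K2015a}.
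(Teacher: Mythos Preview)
The paper does not supply a proof of this lemma; it is quoted verbatim from \cite{Xu.J;Yang.K2015a}. Your plan---invoke the fluid inf-sup to select $\bv_f$, pull its interface trace back through $\bx(\cdot,t)$ to $\hat\Gamma$, lift into the fixed reference structure domain $\hat\Omega_s$, and track how the elementwise-constant Jacobian $\nabla_{\hat\bx}\bx$ distorts the boundary-layer elements to produce the $\alpha_0^{d/2+1}\alpha_1$ factor---is precisely the construction carried out in that reference, as you yourself note in the last sentence. There is nothing to contrast.
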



The following theorem shows that (\ref{eq:saddle}) is well-posed under certain conditions.
\begin{theorem}
\label{thm:wellposed_discrete} Assume the following conditions:

\begin{itemize}
\item  Time step size ${\Delta t}$ is small enough such that the following inequality holds:
\begin{equation}
\label{eq:assumption_c}
C_K\left({\Delta t}\rho_f/\mu_f\right)^{1/2}\|\bw\|_\infty +  {\Delta t}\|\nabla\bz\|_\infty\leq c_0<1,
\end{equation}
where $0<c_0<1$ is a constant. Here $C_K$ is a constant such that $\|\nabla\bu_f\|\leq C_K\|\epsilon(\bu_f)\|$ holds for $\forall\bu_f$.  Moreover, $\|\bw\|_{\infty}=\mbox{ess sup}_{\bx\in\Omega_f}\|\bw(\bx)\|_2$,  $\|\nabla\bz\|_{\infty} = \mbox{ess sup}_{\bx\in\Omega_f}\|\nabla \bz(\bx)\|_2$
\item The assumptions in Lemma
\ref{lm:infsup_b_fem} hold.
\item At a given time step $t^n$,
$\alpha_0,\alpha_1$ and $c_0$ are independent of material and discretization
parameters. 
\end{itemize}
Then, under the norms $\|\cdot\|_V$ and $\|\cdot\|_Q$
the variational problem (\ref{eq:saddle}) at the given time step $t^n$ is uniformly well-posed
with respect to material and discretization parameters.
\end{theorem}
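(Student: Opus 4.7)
The plan is to verify the two Brezzi conditions (\ref{eq:brezzi_a}) and (\ref{eq:brezzi_b}) under the weighted norms $\|\cdot\|_V$ and $\|\cdot\|_Q$, with all bounds tracked so that the constants depend only on $c_0$, $\alpha_0$, $\alpha_1$, the Korn constant $C_K$, and domain-geometric quantities, but not on the material parameters $\mu_f,\mu_s,\lambda_s,\rho_f,\hat\rho_s$ nor on $h$ and $\Delta t$.

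First, I would dispose of the inf-sup condition for $b(\cdot,\cdot)$. The bilinear form $b$ involves only $\bv_f$, so Lemma \ref{lm:infsup_b_fem} already yields $\inf_q\sup_{\bv}\frac{b(\bv,q)}{\|\bv\|_1\|q\|_0}\ge \beta/(\alpha_0^{d/2+1}\alpha_1)$. Converting to weighted norms, I would use $\|\bv\|_V^2\ge \mu_f\|\epsilon(\bv_f)\|_0^2$ and the scaling $\|q\|_Q=r^{-1/2}\|q\|_0$ with $r\ge\mu_f$ to absorb the worst coefficient and obtain a uniform inf-sup constant; boundedness of $b$ follows by the same Cauchy--Schwarz/Korn bookkeeping.

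Next, the substantive part: coercivity of $\tilde a=a+c$ on the discrete kernel $\mathbb{Z}_h$. On $\mathbb{Z}_h$ we have $\mathcal{P}_h\nabla\cdot\bv_f=0$, so by the very definition of $\|\cdot\|_V$, $a(\bv,\bv)=\|\bv\|_V^2$ \emph{provided} that the elastic stress contribution $\Delta t(D\epsilon(R^T\hat\bv_s),\epsilon(R^T\hat\bv_s))_{\hat\Omega_s}$ is genuinely coercive in $\hat\bv_s$. This is the step where the rotation matrix $R$ enters and where I expect the main technical work. Since $R$ is constant in space at a fixed $t^n$ and orthogonal, $\nabla(R^T\hat\bv_s)=R^T\nabla\hat\bv_s$ implies $\|\nabla(R^T\hat\bv_s)\|_0=\|\nabla\hat\bv_s\|_0$; combining with Korn's inequality on $\mathbf{H}^1_0(\hat\Omega_s)$ one gets $(D\epsilon(R^T\hat\bv_s),\epsilon(R^T\hat\bv_s))\ge 2\mu_s C_K^{-2}\|\nabla\hat\bv_s\|_0^2$, so the rotation is harmless and the structure block is coercive with the right $\Delta t\mu_s$ scaling. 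Boundedness of $a$ is then a direct Cauchy--Schwarz estimate since each term in $a$ matches a summand in $\|\cdot\|_V^2$.

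Finally, I would control the non-symmetric perturbation $c(\cdot,\cdot)$. For $\bv\in\mathbb{Z}_h$, Cauchy--Schwarz and Korn give $|(\rho_f(\bw\cdot\nabla)\bv_f,\bv_f)|\le \rho_f\|\bw\|_\infty\|\nabla\bv_f\|\|\bv_f\|\le C_K(\Delta t\rho_f/\mu_f)^{1/2}\|\bw\|_\infty\,\|\bv\|_V^2$ after using $\|\bv_f\|\le(\Delta t/\rho_f)^{1/2}\|\bv\|_V$ and $\|\nabla\bv_f\|\le C_K\mu_f^{-1/2}\|\bv\|_V$. The second term of $c$ gives $|(\rho_f(\bv_f\cdot\nabla)\bz,\bv_f)|\le \Delta t\|\nabla\bz\|_\infty\|\bv\|_V^2$. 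Adding the two and invoking assumption (\ref{eq:assumption_c}), $|c(\bv,\bv)|\le c_0\|\bv\|_V^2$, so $\tilde a(\bv,\bv)\ge (1-c_0)\|\bv\|_V^2$ uniformly. Boundedness of $\tilde a$ follows along the same lines. Assembling the inf-sup plus coercivity estimates and applying the standard Brezzi theorem \cite{Girault.V;Raviart.P1986a} yields uniform well-posedness. The main obstacle I anticipate is the coercivity of the rotated elastic stress, which requires exploiting the orthogonality of $R$ to preserve the Korn constant; everything else is careful accounting of the $r$-weights so that $\mu_f,\mu_s,\lambda_s,\rho_f,\hat\rho_s,\Delta t,h$ drop out of the final constants.
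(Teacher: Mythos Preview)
Your overall strategy---verifying the Brezzi conditions with parameter-tracked constants---matches the paper's proof, and your treatment of the perturbation $c(\cdot,\cdot)$ is exactly right. However, there are two intertwined slips in how you allocate the work.

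First, the coercivity of $a(\cdot,\cdot)$ on $\mathbb{Z}_h$ is \emph{tautological}: since $\|\bv\|_V^2 := a(\bv,\bv) + r\|\mathcal{P}_h\nabla\cdot\bv_f\|_0^2$ by definition, and $\mathcal{P}_h\nabla\cdot\bv_f=0$ on $\mathbb{Z}_h$, you get $a(\bv,\bv)=\|\bv\|_V^2$ with constant $1$ for free. No Korn inequality and no property of $R$ is needed here; your proviso ``provided that the elastic stress contribution is genuinely coercive'' is unnecessary.

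Second, and more substantively, your inf-sup conversion has the inequality in the wrong direction. Given $q$, Lemma~\ref{lm:infsup_b_fem} supplies $\bv$ with $b(\bv,q)\ge \beta'\|\bv\|_1\|q\|_0$; since $\|q\|_0 = r^{1/2}\|q\|_Q$, you need an \emph{upper} bound $\|\bv\|_V\le Cr^{1/2}\|\bv\|_1$ to conclude $b(\bv,q)\ge (\beta'/C)\|\bv\|_V\|q\|_Q$. Your proposed inequality $\|\bv\|_V^2\ge \mu_f\|\epsilon(\bv_f)\|_0^2$ is a lower bound and does not help. The required upper bound is precisely where the orthogonality of $R$ actually enters: one must check $\Delta t\,(D\epsilon(R^T\hat\bv_s),\epsilon(R^T\hat\bv_s))\le Cr\|\hat\bv_s\|_1^2$, and this follows from $\|\epsilon(R^T\hat\bv_s)\|_0\le \|\nabla(R^T\hat\bv_s)\|_0=\|\nabla\hat\bv_s\|_0$ (the observation you already made, just applied at a different step). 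In short, the $R$-argument you prepared for coercivity should be redeployed to the inf-sup step; once that swap is made, the proof goes through exactly as in the paper.
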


%

\begin{proof}
We prove this theorem by verifying the Brezzi's conditions
(\ref{eq:brezzi_a}) and (\ref{eq:brezzi_b}). First we verify the boundedness of
$\tilde a(\cdot,\cdot)$ and its coercivity in $\mathbb{Z}_h$.  By definition, we first have

$$
\begin{aligned}
a(\bv,\bphi)\leq& \|\bv\|_V\|\bphi\|_V, \quad &\forall \bv,\bphi\in\mathbb{V}_h,\\
a(\bv,\bv)\geq & \|\bv\|_V^2,\quad&\forall \bv\in \mathbb{V}_h.\\
\end{aligned}
$$

Based on the first assumption (\ref{eq:assumption_c}), $c(\cdot,\cdot)$ is a small perturbation. This can be shown by the following estimates:

\begin{equation*}
\begin{aligned}
\int_{\Omega_f}\rho_f(\bw\cdot\nabla)\bu_f\cdot \bv_f \leq C_K \left(\frac{{\Delta t}\rho_f}{\mu_f}\right)^{1/2}\|\bw\|_{\infty}\| \bu\|_{V}\|\bv\|_V,\\ 
\end{aligned}
\end{equation*}

\begin{equation*}
\begin{aligned}
\int_{\Omega_f}\rho_f(\bu_f\cdot\nabla)\bz\cdot \bv_f \leq {\Delta t} \|\nabla\bz\|_{\infty}\|\bu\|_V\|\bv\|_V,\\
\end{aligned}
\end{equation*}

\begin{equation}
\label{eq:c_bounded}
c(\bu,\bv)\leq \left(  C_K\left({\Delta t}\rho_f/\mu_f\right)^{1/2}\|\bw\|_\infty +  {\Delta t}\|\nabla\bz\|_\infty\right)\|\bu\|_V\|\bv\|_V\leq c_0\|\bu\|_V\|\bv\|_V.
\end{equation}

Then we have the boundedness and coercivity  of $\tilde a(\bv,\bphi)=a(\bv,\bphi)+c(\bv,\bphi)$:

\begin{equation}
\label{eq:ac_Velliptic}
\begin{aligned}
\tilde a(\bv,\bv)\geq& (1-c_0)\|\bv\|_V^2,\quad&\forall \bv\in \mathbb{Z}_h,\\
\tilde a(\bv,\bphi)\leq& (1+c_0)\|\bv\|_V\|\bphi\|_V ,\quad&\forall\bv,\bphi\in \mathbb{V}_h.
\end{aligned}
\end{equation}

 The boundedness of $b(\cdot,\cdot)$ is shown by
 \begin{equation}
 \label{eq:b_bounded}
 b(\bv,q)\leq \|\mathcal{P}_h\nabla\cdot\bv\|_{0,\Omega_f}\|q\|_0\leq  r^{1/2}\|\mathcal{P}_h\nabla\cdot\bv\|_{0,\Omega_f}r^{-1/2}\|q\|_0 \leq\|\bv\|_V\|q\|_Q.
  \end{equation}
Therefore, we only need to prove the inf-sup condition of $b(\cdot,\cdot)$.

Since we have
$$\|\epsilon(R^T \hat \bv_s)\|^2\leq C \|\nabla(R^T\hat\bv_s)\|^2=C\|\nabla\hat\bv_s\|^2,$$
 the following inequality holds
\begin{equation}
\label{eq:Vr} \| \bv\|_{V}\leq C r^{1/2}\|
\bv\|_{1,\Omega},\quad\forall\bv\in\mathbb{V}_h.
\end{equation}

Lemma \ref{lm:infsup_b_fem} indicates that
$$
\inf_{q\in \mathbb{Q}_h}\sup_{\bv\in \mathbb{V}_h}\frac{(\nabla\cdot
\bv,q)_{\Omega_f}}{\| \bv\|_V\|q\|_Q}\geq
\frac{C}{\alpha_0^{d/2+1}\alpha_1}.$$ Since $\alpha_0$ and $\alpha_1$ are
independent of material and discretization parameters, the inf-sup
constant is uniformly bounded below.  Therefore, we have shown that
(\ref{eq:saddle}) is uniformly well-posed with respect to material and discretization
parameters.
\end{proof}

\begin{remark}
Based on the well-posedness shown above, we can also derive robust
block preconditioners for the coupled fluid-structure equations as
\cite{Xu.J;Yang.K2015a}. However, we do not elaborate on this topic
in this paper, as the mixed finite element we employ for the
numerical experiments in Section \ref{sec:numerics} is the
$P_1$-$P_1$ type with the pressure stabilization in order to save
the huge computational cost in 3D.  It is an unstable finite element
pair without any stabilization. Robust block preconditioners for
$P_1$-$P_1$ discretization of FSI with the pressure stabilization is
part of the future work we will consider. In our numerical
experiments, we adopt some commonly used block preconditioners for
saddle-point problems, more details of which will be shown in the
next section.
\end{remark}

\section{Algorithm description}\label{section:algorithm} In this
section, we describe a monolithic algorithm involving a relaxed
fixed point iteration for the FSI simulation at the current $n$-th
time step. Roughly speaking, we conduct a fixed-point iteration
between the coupled system of fluid-structure equations and the ALE
mapping equation, where, the fluid-structure coupling system is
solved together, i.e., in the monolithic fashion. Basically, we
solve the coupled equations of fluid and structure on a known mesh
${\mathbb{T}}_{f,h}\cup\hat{\mathbb{T}}_{s,h}$ by a nonlinear
iteration until convergence, where, ${\mathbb{T}}_{f,h}$ is the
fluid mesh updated from the previous iteration step, and
${\mathbb{T}}_{s,h}$ is the fixed structure mesh. Then, we update
the fluid mesh ${\mathbb{T}}_{f,h}$ by solving ALE mapping based
upon the new solution of structure velocity on the interface
$\Gamma$, and so on. Continue this fixed-point iteration until the
fluid mesh converges. Then we march to the next time step.
Details are shown in Algorithm 1, 
where the rotation matrix $R$, depending on the time only, is given
if the angular velocity is prescribed on the axis of rotation for
the case of active rotation. Whereas, in the case of passive
rotation, $R$ is unknown and depends on both space and time, needs
to be updated at each iteration step. We leave the passive rotation
case as another part of our future work.
\begin{algorithm}[H]\label{alg:FSI}
\SetKw{utl}{until} \caption{ALE method for FSI involving an elastic
rotor} On the $n$-th time step, let $\bw^{n,0}=\bw^{n-1}$,
$\mathbb{T}_{f,h}^{n,0}=\mathbb{T}_{f,h}^{n-1}$.\\
\For{$j=1,2,\cdots$ \utl convergence}{
\begin{enumerate}
\item Solve (\ref{rotationFSI}) for $({\bm v}_f^{n,j}, p^{n,j}, \hat{\bm
v}_s^{n,j})$.
\item Compute $\hat{\bm u}_s^{n,j}$ with (\ref{u2vtrapzoid}).
\item Compute $\hat{\bm u}_\theta^{n,j}=(R^{n}-{\bm I})({{\hat\bx}}-{{\hat\bx}}_0)$.
\item Find
$\hat{\bm u}_m^{n,j}$ on $\Gamma_{rs}$ by the searching scheme shown
in Section \ref{sec:newALE}.
\item Update the displacements on the interface:
\begin{eqnarray*}
\hat{\bm u}_{s,*}^{n,j}=(1-\omega)\hat{\bm
u}_{s}^{n,j-1}+\omega\hat{\bm u}_{s}^{n,j},\quad\mbox{on } \hat\Gamma,\notag\\
\hat{\bm u}_{\theta,*}^{n,j}=(1-\omega)\hat{\bm
u}_\theta^{n,j-1}+\omega\hat{\bm u}_\theta^{n,j},\quad\mbox{on }
\hat\Gamma,
\end{eqnarray*}
where, $\omega\in(0,1]$ is the relaxation number.
\item Solve ALE mapping equation (\ref{ALEmappingLap}) for $\mathcal{A}_D^{n,j}$ with
$\hat{\bm u}_{s,*}^{n,j}-\hat{\bm u}_{\theta,*}^{n,j}$  and
$\hat{\bm u}_m^{n,j}$ as the Dirichlet boundary conditions on
$\hat\Gamma$ and $\hat{\Gamma}_{rs}$, respectively.
\item Calculate the total fluid mesh displacement $\hat{\bm u}_f^{n,j}$
with $\hat{\bm u}_f^{n,j}=\hat{\bm
u}_{\theta,*}^{n,j}+\mathcal{A}_D^{n,j}$, with which the rotational fluid mesh, $\mathbb{T}_{rf,h}^{n,j}$, is updated as
$\mathbb{T}_{rf,h}^{n,j}=\mathbb{T}_{rf,h}^{n-1}+\hat{\bm u}_f^{n,j}$.\\
\item Update the fluid mesh velocity $\bw^{n,j}$ by
$\bw^{n,j}=\frac{\hat{\bm u}_f^{n,j}-\hat{\bm u}_f^{n-1}}{\Delta
t}$.
\item
Check if the iteration converges. If not, let $j\leftarrow j+1$, and
continue the iteration. If yes, let $n\leftarrow n+1$, and march to
the next time step.
\end{enumerate}
}
\end{algorithm}

In Algorithm 1, the most time consuming part is to solve
(\ref{rotationFSI}), an efficient and robust linear solver is
crucial to speed up the simulation. In the following we briefly
introduce the preconditioning technique we used in our solver. For
the convenience of notation, we denote the linear system of the
coupled fluid-structure system as
\begin{equation}
\left(
\begin{array}{cc}
A&B^T\\
B&-C
\end{array}
\right)
\left(
\begin{array}{c}
v\\
p\\
\end{array}
\right)
=
\left(
\begin{array}{c}
f\\
0\\
\end{array}
\right),
\end{equation}
where, $v$ and $p$ are the vectors corresponding to velocity and
pressure, respectively. The block matrices $A$ and $B$ arise from
the corresponding discretizations of the bilinear forms
$\tilde a(\cdot,\cdot)$ and $b(\cdot,\cdot)$, respectively. The nonzero
block $C$ is from the pressure stabilization if the $P_1$-$P_1$
mixed element is used, otherwise $C=\emptyset$.

We use the flexible GMRes as the iterative solver for the coupled
system with a block triangular preconditioner $P$, which is an
approximation of

$$\left(
\begin{array}{cc}
A&0\\
B&-S
\end{array}
\right)^{-1},
$$
where $S:= C+B\mbox{diag}(A)^{-1}B^T$ is an approximation of the
Schur complement $C+BA^{-1}B^T$.  We define the action of
$\mathbb{P}$ as follows.  Given $\binom{f}{g}$, we obtain
$\binom{v}{p}=\mathbb{P}\binom{f}{g}$ by the following procedure.
\begin{enumerate}
\item Solve $Av=f$ using the algebraic multigrid preconditioned GMRes. In particular, we use the unsmoothed aggregation AMG with ILU and Gauss-Seidel smoothers.
\item Solve $Sp=-g+Bv$ using the algebraic multigrid preconditioned GMRes. Here we use the unsmoothed aggregation AMG with the Gauss-Seidel smoother.
\end{enumerate}

There are extensive literature on fast solvers for the monolithic
FSI simulation. We refer to
\cite{Heil.M2004a,Badia.S;Quaini.a;Quarteroni.a2008b,Badia.S;Quaini.A;Quarteroni.A2008a,Gee.M;Kuttler.U;Wall.W2011a,Turek.S;Hron.J2010a,Cai.X2010a,Barker.A;Cai.X2009a,Barker.A;Cai.X2010a,Barker.A;Cai.X2010b,Wu.Y;Cai.X2014a,Xu.J;Yang.K2015a}
for details. In particular, when stable mixed finite element pairs
are used, we can develop robust block preconditioners based on the
well-posedness shown in Theorem \ref{thm:wellposed_discrete}. (See
\cite{Xu.J;Yang.K2015a}.)

\begin{remark}
Because of the simultaneous solution of both fluid and structure in
the monolithic method, the interface conditions are naturally
enforced.  Therefore, the resulting algorithm is much more stable
than partitioned algorithms. Especially, the partitioned method
turns out to be unconditionally unstable with an explicit scheme, or
to be a oscillating iteration with questionable convergence when an
implicit scheme is used, if the densities of fluid and structure
become comparable or if the domain has a slender shape. Such
phenomenon is also called the added-mass effect
\cite{Richter.T;Wick.T2010a,Ryzhakov.P;Rossi.R;Idelsohn.S;Onate.E2010a,
Idelsohn.S;Del.P;Rossi.R;Onate.E2009a,Hou.G;Wang.J;Layton.A2012a},
which the monolithic algorithm can completely avoid.
\end{remark}

\section{Numerical experiments}
\label{sec:numerics} To verify the correctness and efficiency of our
proposed model and numerical methods, we first introduce a
simplified three-dimensional hydro-turbine with the shape of a
curving cross immersed in the laminar flow which is fully developed
in a straight open channel. As shown in Fig. \ref{fig:toy}, the
cross lies close to the inlet and rotates about its rigid axis of
rotation in the plane. Its axis of rotation is parallel with the
flow direction, thus it faces to the incoming flow and may deform to
some extent due to the fluid impact.
The geometrical and physical parameters of this model problem are
given in Table \ref{table:2dcross_parameters}. The hydro-turbine
rotor is simulated under the prescribed steady incoming flow, which
is described as a parabolic velocity function at the inlet with the
maximum value of 1.5 m/s, and the prescribed angular velocity at the
axis of rotation of rotor $\Gamma_{in}$,  of 1 rad/s.  The incoming
flow at inlet and the angular velocity at $\Gamma_{in}$  are
responsible for the Dirichlet boundary conditions of fluid velocity
and structure velocity, respectively. In addition, the no-slip
boundary condition is applied to the wall, and do-nothing boundary
condition is applied to the fluid velocity at the outlet.
\begin{table}[!htb]
\caption{Geometrical, physical and operating parameters
\label{table:2dcross_parameters}}\centerline{
\begin{tabular}{llll}
  \hline
  $Parameters/properties$ & $Symbol$ & $Value$ & $Unit$\\
  \hline
  \underline{\emph{Modeling domain dimensions}} &&&\\
  Channel length &$L_{\text{channel}}$    & $0.5$&m\\
  Channel width (height)  &$W_{\text{channel}}$    & $0.2$&m\\
  Cross length &$L_{\text{cross}}$ &0.1&m\\
  Cross width &$W_{\text{cross}}$ &0.015&m\\
  Cross thickness &$T_{\text{cross}}$ &0.05&m\\
  \hline
  \underline{\emph{Physical and transport parameters}} &&&\\
  Dynamic viscosity of fluid &$\mu_f$  &1.0&kg/(m$\cdot$s)\\
  Density of fluid &$\rho_f$&1000&kg/m$^3$\\
  Density of structure &$\rho_s$&1280&kg/m$^3$\\
  Young's modulus &$E$&$2.5\times 10^6$&Pa\\
  Poisson's ratio &$\nu$&0.384&\\
  \hline
 \underline{\emph{Operating parameters}} &&&\\
 The maximum incoming velocity & ${\bm u}_{\text{in}}$ & 1.5 & m/s\\
 The prescribed angular velocity & $\omega$ & 1.0 & rad/s\\
 Tolerance of nonlinear iteration & $\varepsilon_{tol}$& $10^{-6}$ &\\
 \hline
\end{tabular}}
\end{table}


\begin{figure}[htbp]
\begin{center}
\graphicspath{{:figure:}}
\includegraphics*[height=1.4in,width=2.2in]{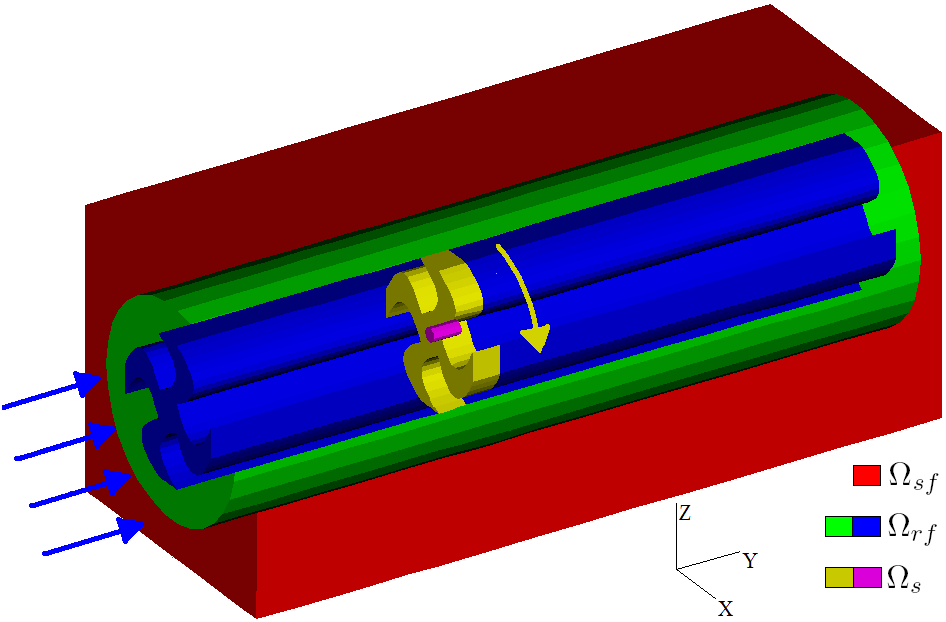}
\hspace*{0.5in}
\includegraphics*[height=1.4in,width=2.in]{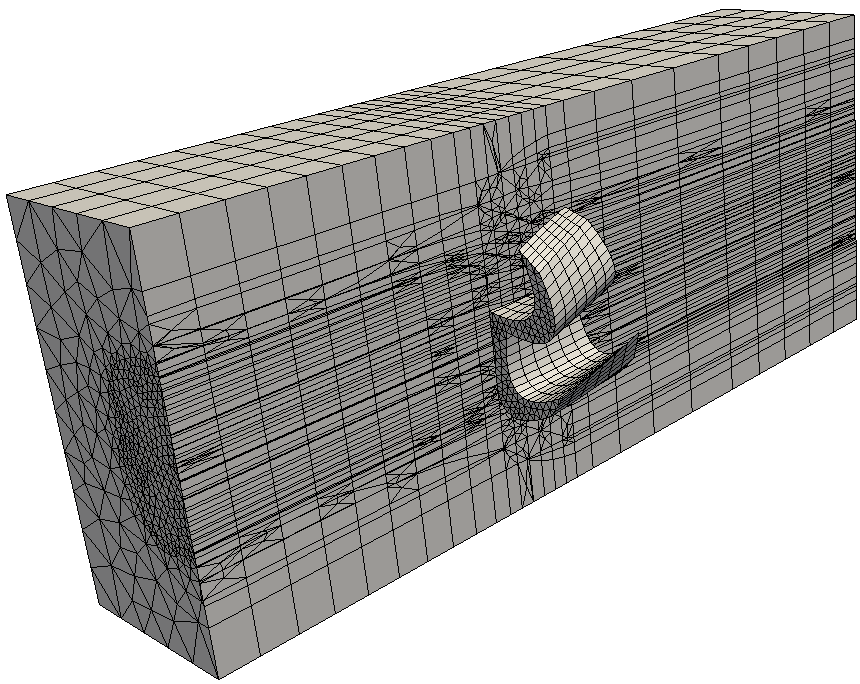}
\end{center}
\caption{The 3D self defined elastic rotor:
$\Omega=\Omega_s+\Omega_f$ and $\Omega_f=\Omega_{sf}+\Omega_{rf}$,
and the mesh of half domain, where the interfaces
$\Gamma_{rs}=\partial\Omega_{rf}\cap\partial\Omega_{sf}$ and
$\Gamma=\partial\Omega_{rf}\cap\partial\Omega_{s}$.} \label{fig:toy}
\end{figure}

As illustrated by Fig. \ref{fig:toy}, we define a cylindrical fluid
subdomain $\Omega_{rf}\subset\Omega_f$ to embrace the elastic rotor
$\Omega_s$ and rotate together on the same axis of rotation. Let
$\Omega_{sf}:=\Omega_f\backslash \Omega_{rf}$ denote the rest of the
fluid domain that is stationary. Moreover, on the interface between
$\Omega_{sf}$ and $\Omega_{rf}$, $\Gamma_{sf}$, and the interface
between $\Omega_{rf}$ and $\Omega_s$, $\Gamma$, we apply the
Master-Slave relations to the pairs of grid points along with a
certain triangulation pattern. Such triangulation pattern on the
interface $\Gamma_{rs}$ is defined in a specific manner such that it
is easy to search and reconnect each pair of the grid point when the
slave point on one side ($\partial\Omega_{rf}$) rotates away from
its master point on the other side ($\partial\Omega_{sf}$). For
instance, since we need to choose an axisymmetric domain as the
rotational fluid domain $\Omega_{rf}$, the cross section that is
perpendicular to the central axis of $\Omega_{rf}$ is always a disk,
on which the grid pattern can be triangulated as shown in Fig.
\ref{fig:rotationalFSI1} in order to easily search the paired grid
points on $\Gamma_{rs}$. We basically use the approach shown in Fig.
\ref{fig:interface_match} to re-match each point pair on
$\Gamma_{rs}$ when they are away from each other. Therefore, the
entire fluid mesh retains the conformity.
\begin{figure}[htbp]
\begin{center}
\includegraphics[height=1.5in,width=1.5in]{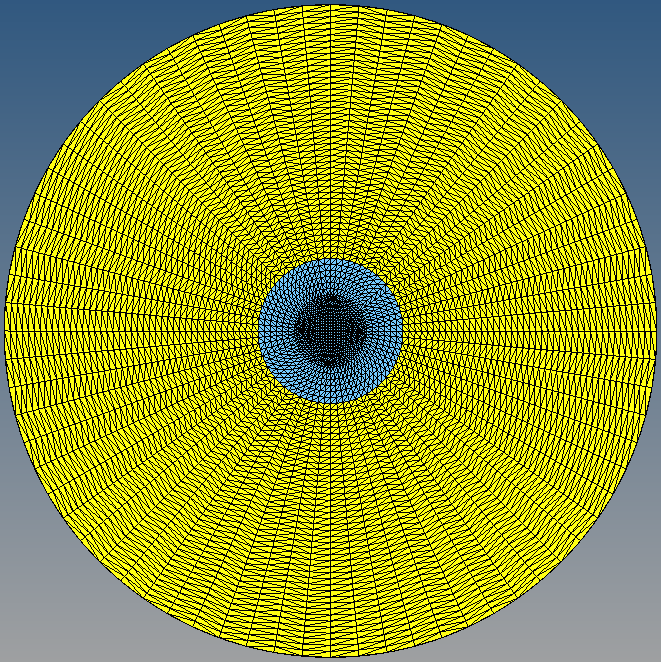}
\hspace*{0.5in}
\includegraphics[height=1.5in,width=1.5in]{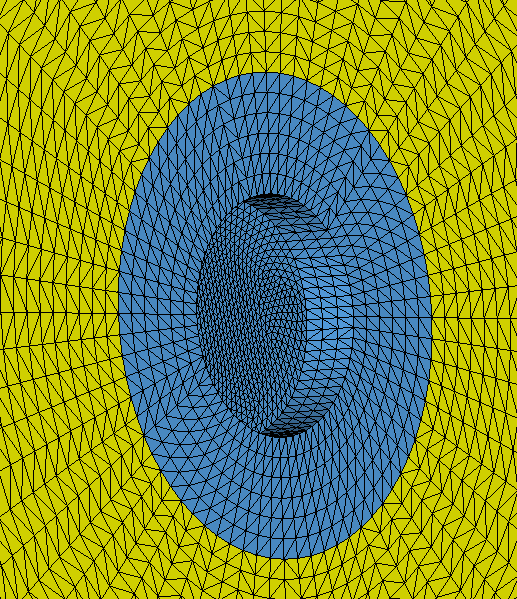}
\end{center}
\caption{The triangulation pattern on the interface of the
rotational fluid domain $\Omega_{rf}$ and the stationary fluid
domain $\Omega_{sf}$.} \label{fig:rotationalFSI1}
\end{figure}

Thereafter, by means of $P_1$-$P_1$ mixed element with pressure
stabilization to discretize momentum and mass equations, linear
finite element to discretize ALE equation, and Algorithm 1, we
obtain the following numerical results with the time step size
$\Delta t=0.01$s and the mesh size $h<0.02$m, as shown in Figs.
\ref{fig:velocitymagnitude}-\ref{fig:cross3dstreamline}.
\begin{figure}[htdp]
\centerline{
\begin{tabular}{c}
\parbox{1.9in}{ \subfigure[Time=2s]{
   \begin{minipage}[t]{1\linewidth}
      \centering
      \includegraphics*[height=.9in,width=1.9in]{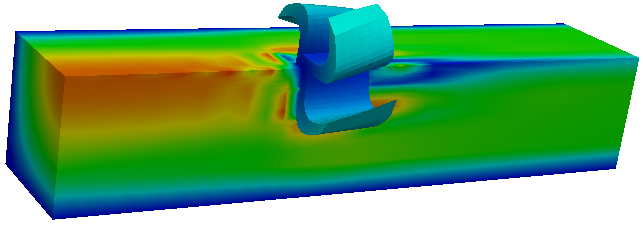}
      \end{minipage}}}
\hspace*{0.0in}
\parbox{1.9in}{\subfigure[Time=4s]
{
   \begin{minipage}[t]{1\linewidth}
      \centering
      \includegraphics*[height=.9in,width=1.9in]{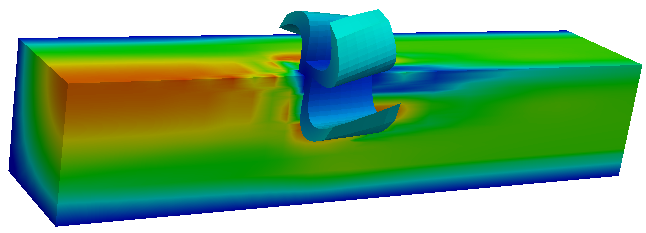}
      \end{minipage}}}
\hspace*{0.0in}
\parbox{1.9in}{\subfigure[Time=6s]
{
   \begin{minipage}[t]{1\linewidth}
      \centering
      \includegraphics*[height=.9in,width=1.9in]{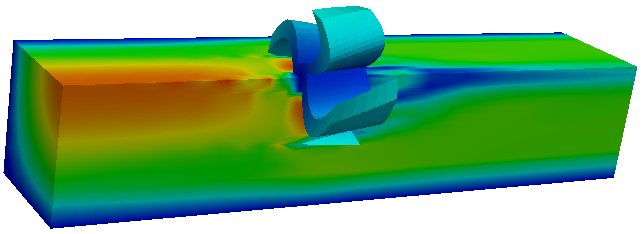}
      \end{minipage}}}
\hspace*{0.0in}
\parbox{.4in}{ 
   \begin{minipage}[t]{1\linewidth}
      \centering
      \includegraphics*[height=1.in,width=.4in]{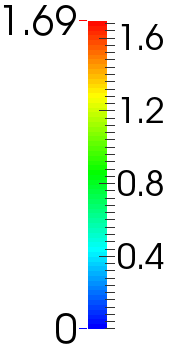}
      \end{minipage}}
\\
\parbox{1.9in}{ \subfigure[Time=8s]{
   \begin{minipage}[t]{1\linewidth}
      \centering
      \includegraphics*[height=.9in,width=1.9in]{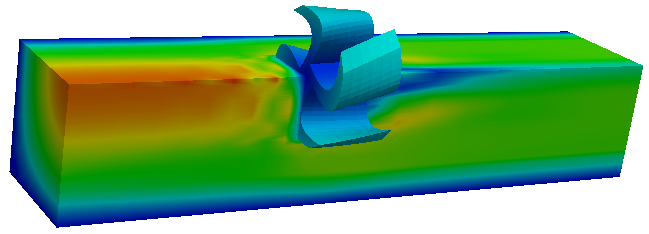}
      \end{minipage}}}
\hspace*{0.0in}
\parbox{1.9in}{\subfigure[Time=10s]
{
   \begin{minipage}[t]{1\linewidth}
      \centering
      \includegraphics*[height=.9in,width=1.9in]{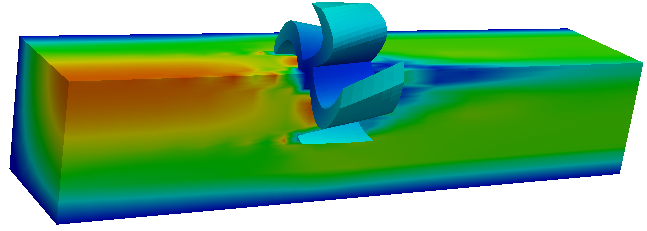}
      \end{minipage}}}
\hspace*{0.0in}
\parbox{1.9in}{\subfigure[Time=12s]
{
   \begin{minipage}[t]{1\linewidth}
      \centering
      \includegraphics*[height=.9in,width=1.9in]{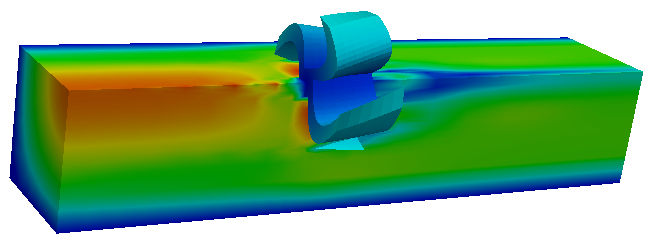}
      \end{minipage}}}
\hspace*{0.0in}
\parbox{.4in}{ 
   \begin{minipage}[t]{1\linewidth}
      \centering
      \includegraphics*[height=1.in,width=.4in]{vmscale.png}
      \end{minipage}}
\\
\parbox{1.9in}{ \subfigure[Time=14s]{
   \begin{minipage}[t]{1\linewidth}
      \centering
      \includegraphics*[height=.9in,width=1.9in]{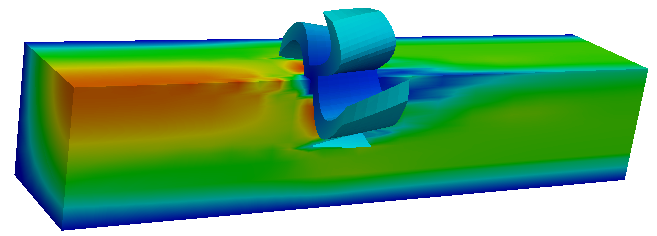}
      \end{minipage}}}
\hspace*{0.0in}
\parbox{1.9in}{\subfigure[Time=16s]
{
   \begin{minipage}[t]{1\linewidth}
      \centering
      \includegraphics*[height=.9in,width=1.9in]{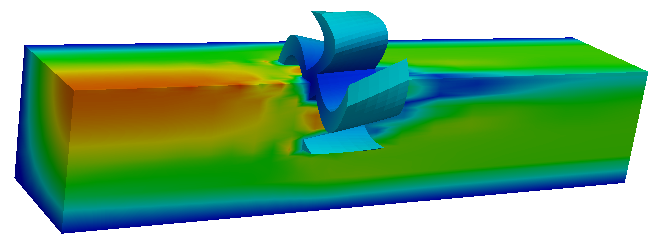}
      \end{minipage}}}
\hspace*{0.0in}
\parbox{1.9in}{\subfigure[Time=18s]
{
   \begin{minipage}[t]{1\linewidth}
      \centering
      \includegraphics*[height=.9in,width=1.9in]{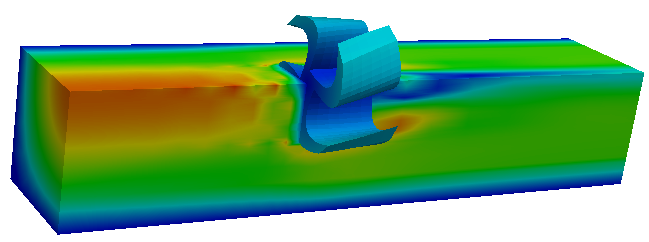}
      \end{minipage}}}
\hspace*{0.0in}
\parbox{.4in}{ 
   \begin{minipage}[t]{1\linewidth}
      \centering
      \includegraphics*[height=1.in,width=.4in]{vmscale.png}
      \end{minipage}}
\end{tabular}
} \caption{A self-defined elastic rotor: the evolution of magnitude
of velocity with time in 1/4 fluid domain.}
\label{fig:velocitymagnitude}
\end{figure}
Fig. \ref{fig:velocitymagnitude} illustrates the evolution of the
magnitude of velocity field with time marching in a quarter part of
the fluid domain along the flow direction, while the incoming fluid
keeps flowing in the inlet with the prescribed maximum velocity
magnitude, 1.5 m/s, and gets detoured by the elastic rotor which
preserves spinning around its axis of rotation in the plane that is
perpendicular to the flow direction with the prescribed angular
velocity, 1 rad/s. With these given steady velocities, we can
observe that the entire fluid field basically attains a relatively
steady state after 2s, the fluid flow is significantly disturbed
near the spinning turbine, such disturbance dies away along with the
increasing distance from the turbine. The magnitude of velocity
field forms a nearly vacuum region behind the turbine in which the
velocity magnitude is much smaller than elsewhere due to the lower
pressure there, and a portion of this region that is nearest to the
turbine is twisted by the spin, periodically. Fig.
\ref{fig:cross3dstreamline} shows the streamline field and velocity
vector field at 19s, respectively, further illustrating the
disturbance status of the fluid right after it flows over the
spinning turbine, where, the streamline is twisted away from the
mainstream near the turbine due to its spin, then merges back again
when the fluid is relatively far away from the turbine, and continue
to flow straight to the outlet.

Note that the magnitude of Reynolds number that we adopt in this
example is just 100 or so, truly resulting in a laminar flow. Since
we only focus on the methodology study of simulating the dynamical
interaction between an elastic rotor and the fluid, the fluids with
a large Reynolds number or even turbulence flow are not our concern
in this paper. In the future, we will consider a turbulence model
for fluids with a large Reynolds number, but the ALE approach and
the monolithic algorithm developed in this paper are still valid for
the simulation of the interaction between the turbulent flow and a
hydro-turbine.

\begin{figure}[htdp]
\centerline{
\parbox{1.7in}{ \subfigure[Magnitude of velocity field]{
   \begin{minipage}[t]{1\linewidth}
      \centering
\includegraphics[height=1.2in,width=1.7in]{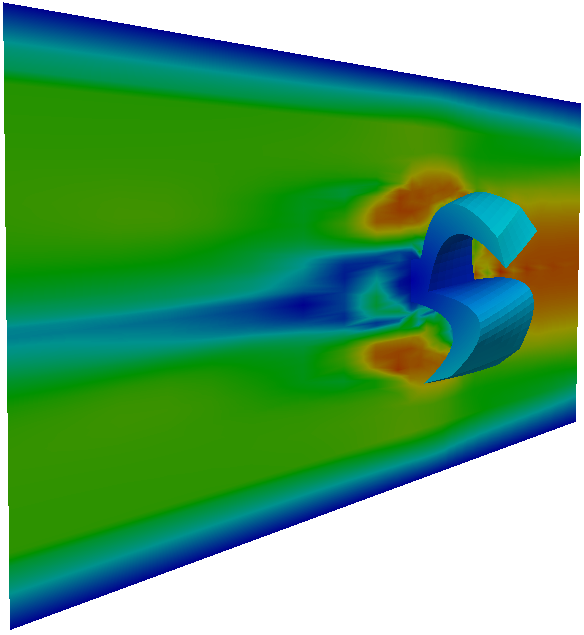}
      \end{minipage}}}
\hspace*{0.0in}
\parbox{1.9in}{ \subfigure[Streamline field]{
   \begin{minipage}[t]{1\linewidth}
      \centering
\includegraphics[height=1.in,width=1.9in]{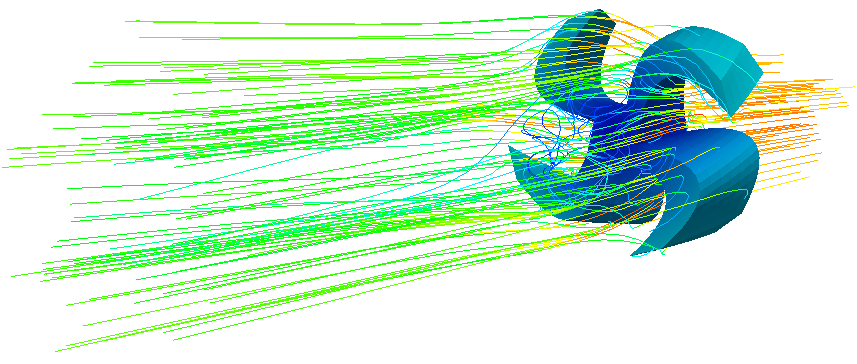}
      \end{minipage}}}
\hspace*{0.0in}
\parbox{1.9in}{\subfigure[Velocity field]
{
   \begin{minipage}[t]{1\linewidth}
      \centering
\includegraphics*[height=1.2in,width=1.9in]{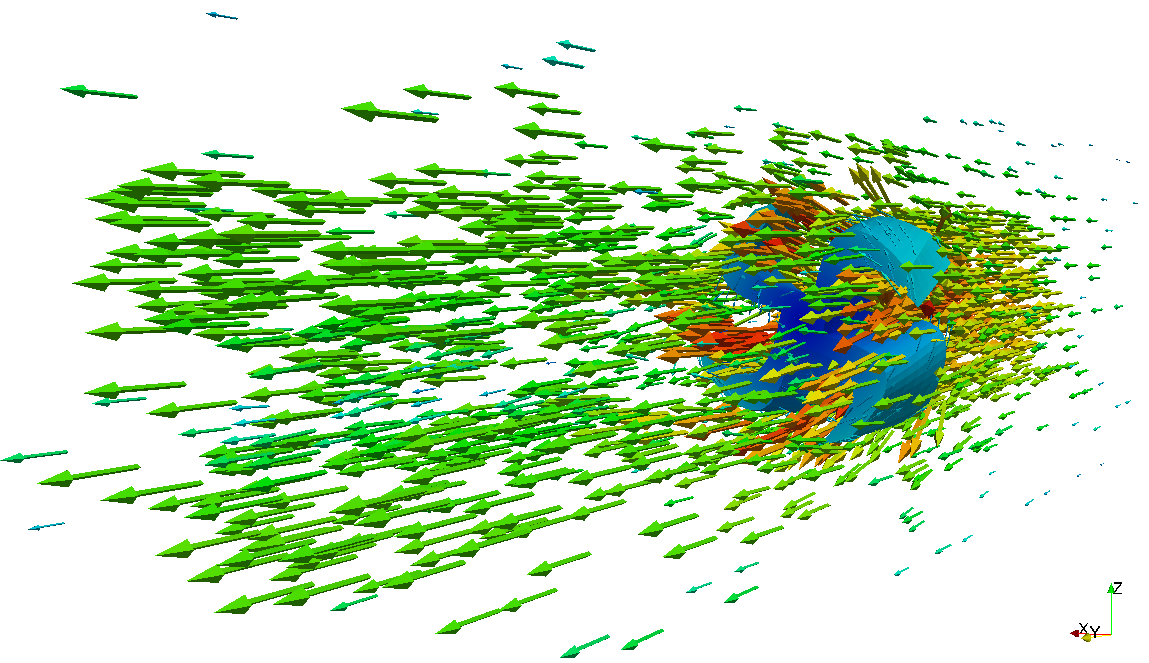}
      \end{minipage}}}
\hspace*{0.0in}
\parbox{.4in}{ 
   \begin{minipage}[t]{1\linewidth}
      \centering
      \includegraphics*[height=1.in,width=.4in]{vmscale.png}
      \end{minipage}}
} \caption{A self-defined elastic rotor: magnitude of velocity field
on the central cross section along flow direction (left); streamline
(middle) and velocity (right) fields at 19s.
\label{fig:cross3dstreamline}}
\end{figure}

It is not easy to observe the deformation of the elastic rotor since
its stiffness is measured by a relatively large Young's modulus (2.5
MPa) in this example. To show the deformation of a spinning rotor
due to the fluid impact, we need to look at some extreme
circumstances, e.g., a flexible and/or a thinner rotor with smaller
Young's modulus, in order to observe a dramatically large
deformation occurring on the hydro-turbine blades while rotating.
Fig. \ref{fig:softsolid} illustrates the desired numerical results,
where the self defined flexible and thinner blades with a relatively
smaller Young's modulus deform while rotate, dramatically.




\begin{figure}[htdp]
\centerline{
\begin{tabular}{c}
\parbox{1.9in}{ \subfigure[Time=0.06s]{
   \begin{minipage}[t]{1\linewidth}
      \centering
      \includegraphics*[height=.9in,width=1.9in]{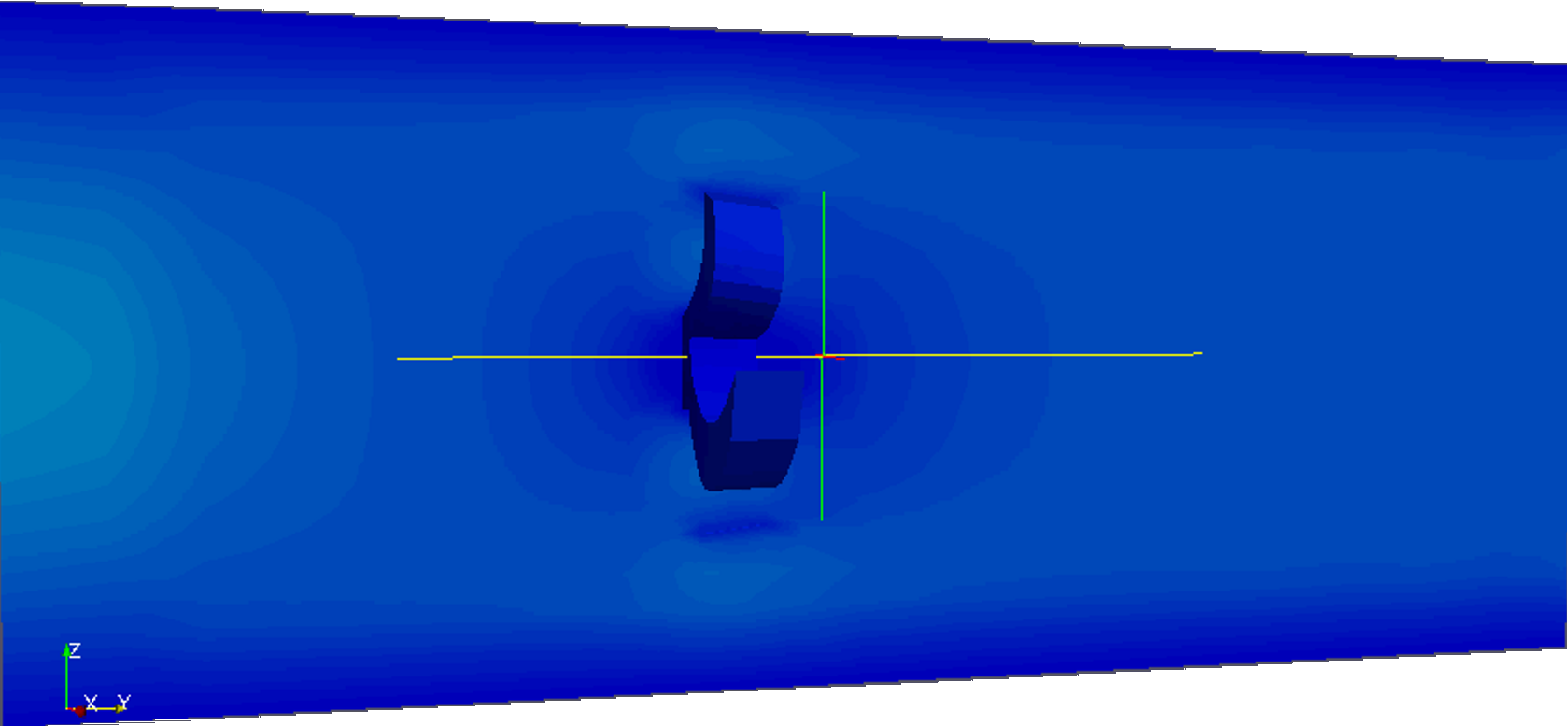}
      \end{minipage}}}
\hspace*{0.0in}
\parbox{1.9in}{\subfigure[Time=0.12s]
{
   \begin{minipage}[t]{1\linewidth}
      \centering
      \includegraphics*[height=.9in,width=1.9in]{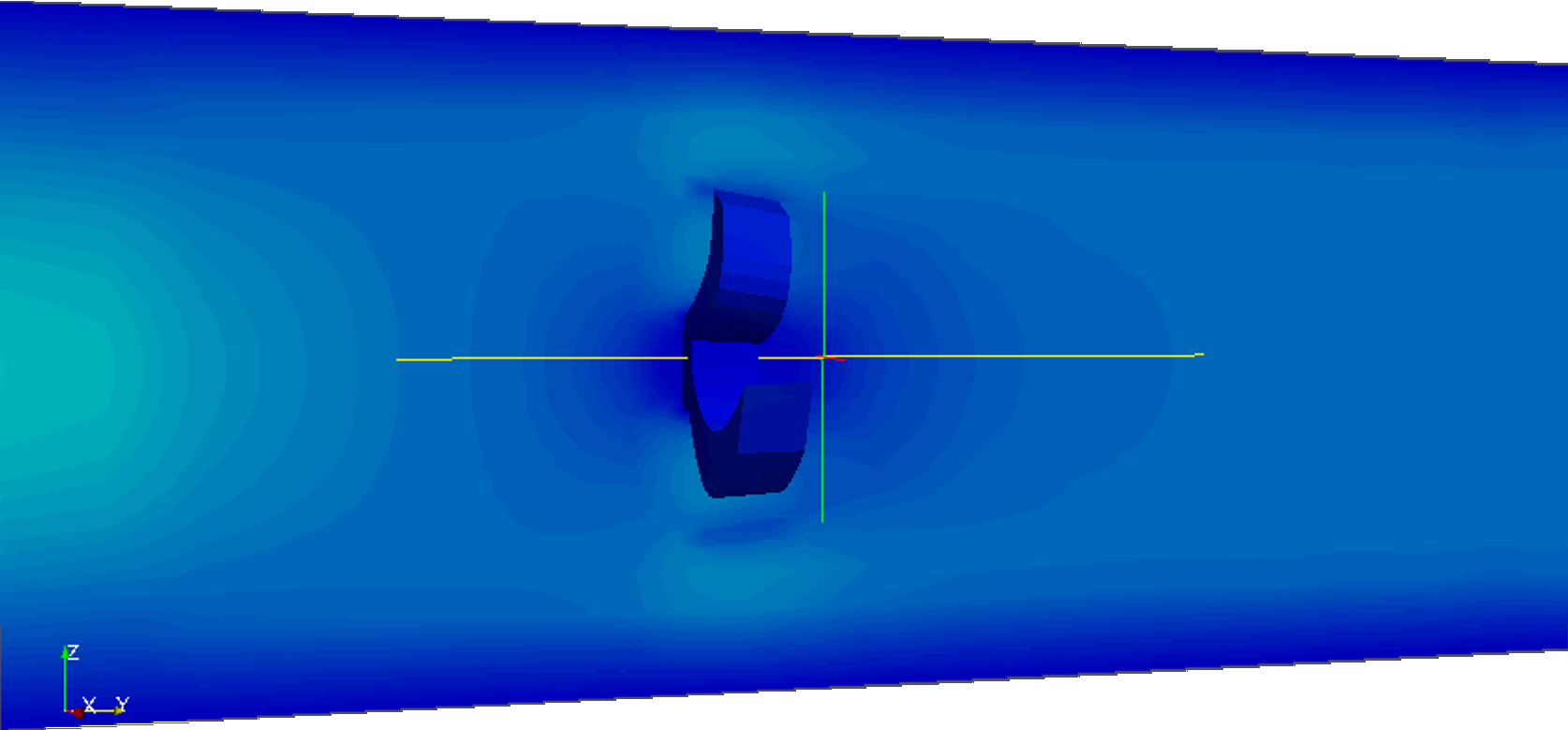}
      \end{minipage}}}
\hspace*{0.0in}
\parbox{1.9in}{\subfigure[Time=0.18s]
{
   \begin{minipage}[t]{1\linewidth}
      \centering
      \includegraphics*[height=.9in,width=1.9in]{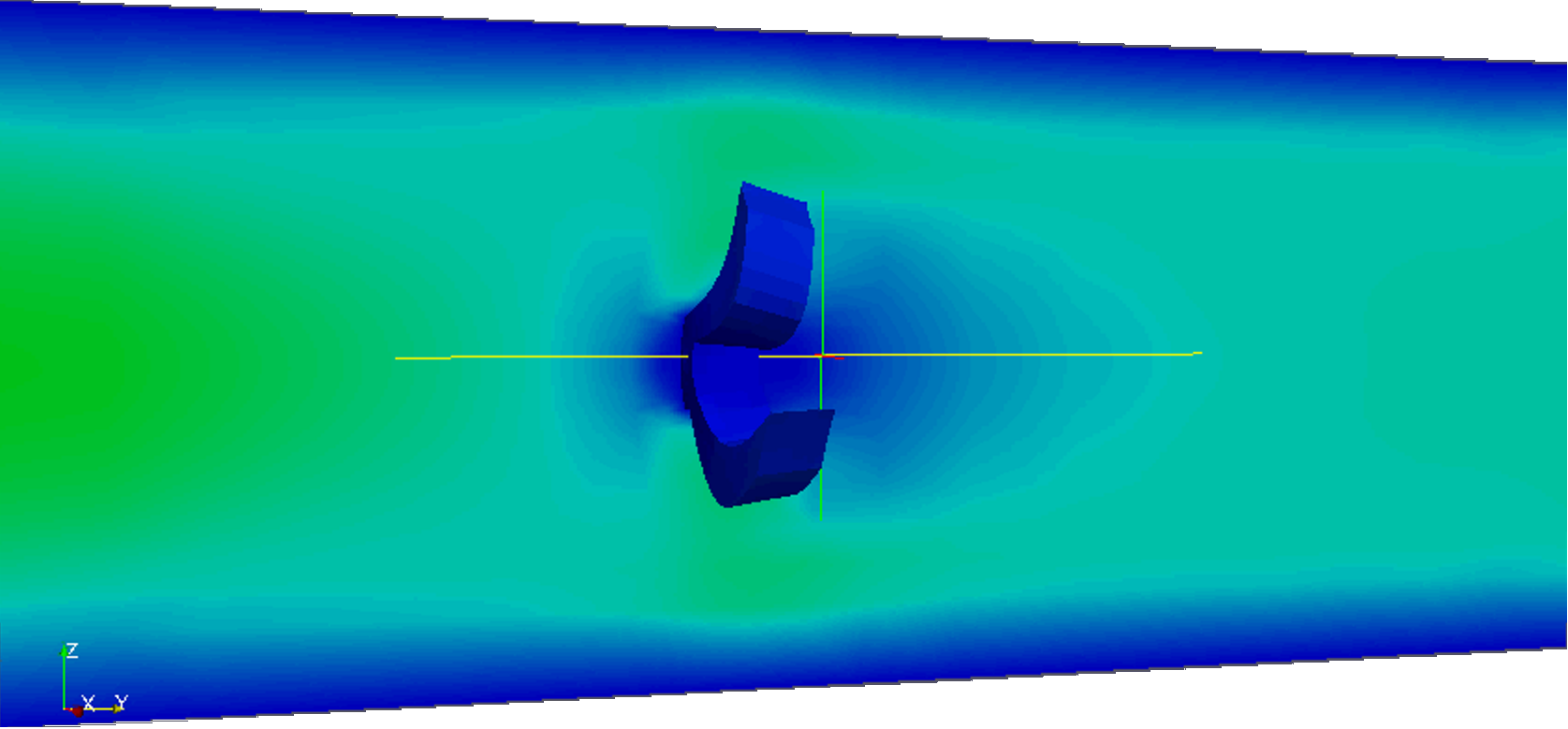}
      \end{minipage}}}
\\
\parbox{1.9in}{ \subfigure[Time=0.24s]{
   \begin{minipage}[t]{1\linewidth}
      \centering
      \includegraphics*[height=.9in,width=1.9in]{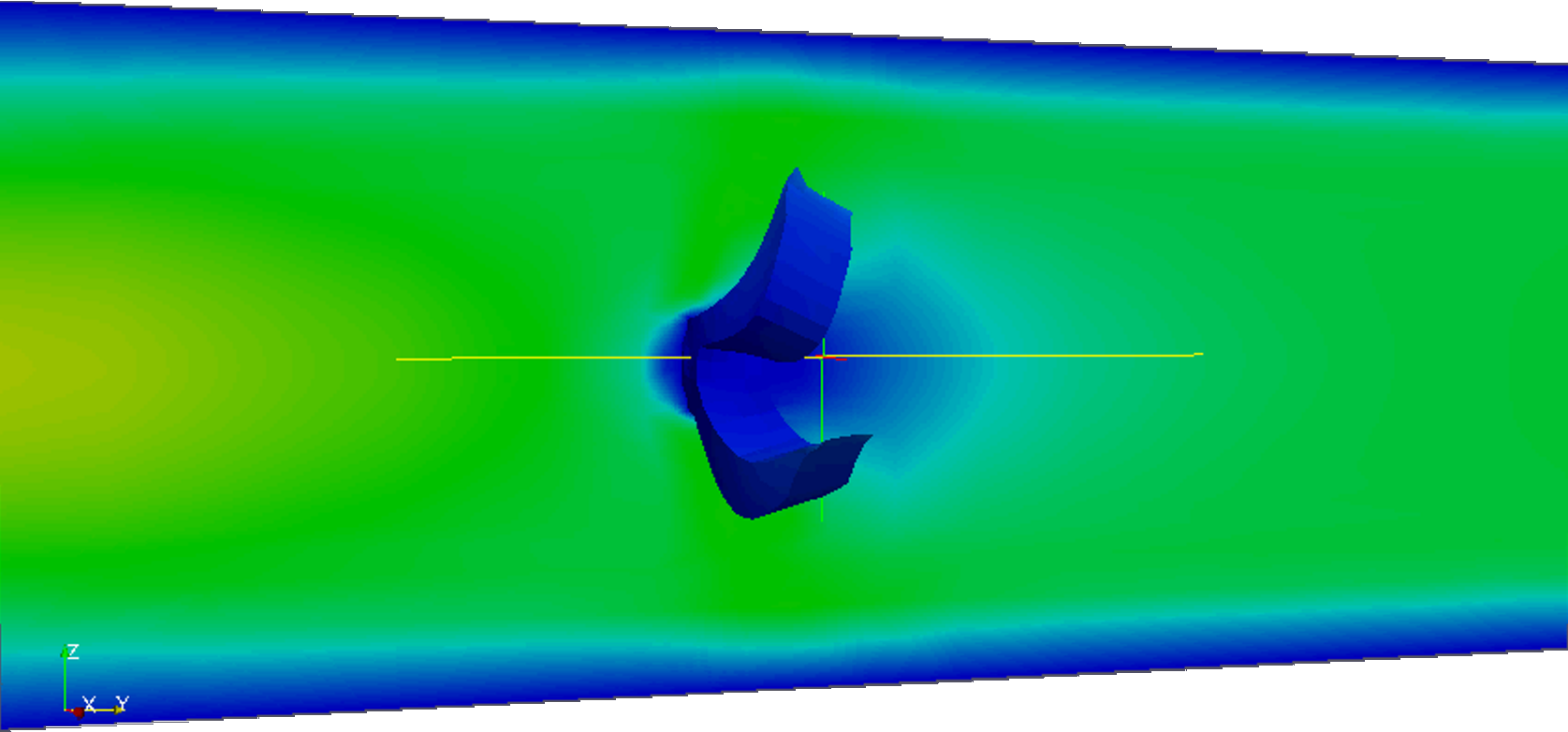}
      \end{minipage}}}
\hspace*{0.0in}
\parbox{1.9in}{\subfigure[Time=0.3s]
{
   \begin{minipage}[t]{1\linewidth}
      \centering
      \includegraphics*[height=.9in,width=1.9in]{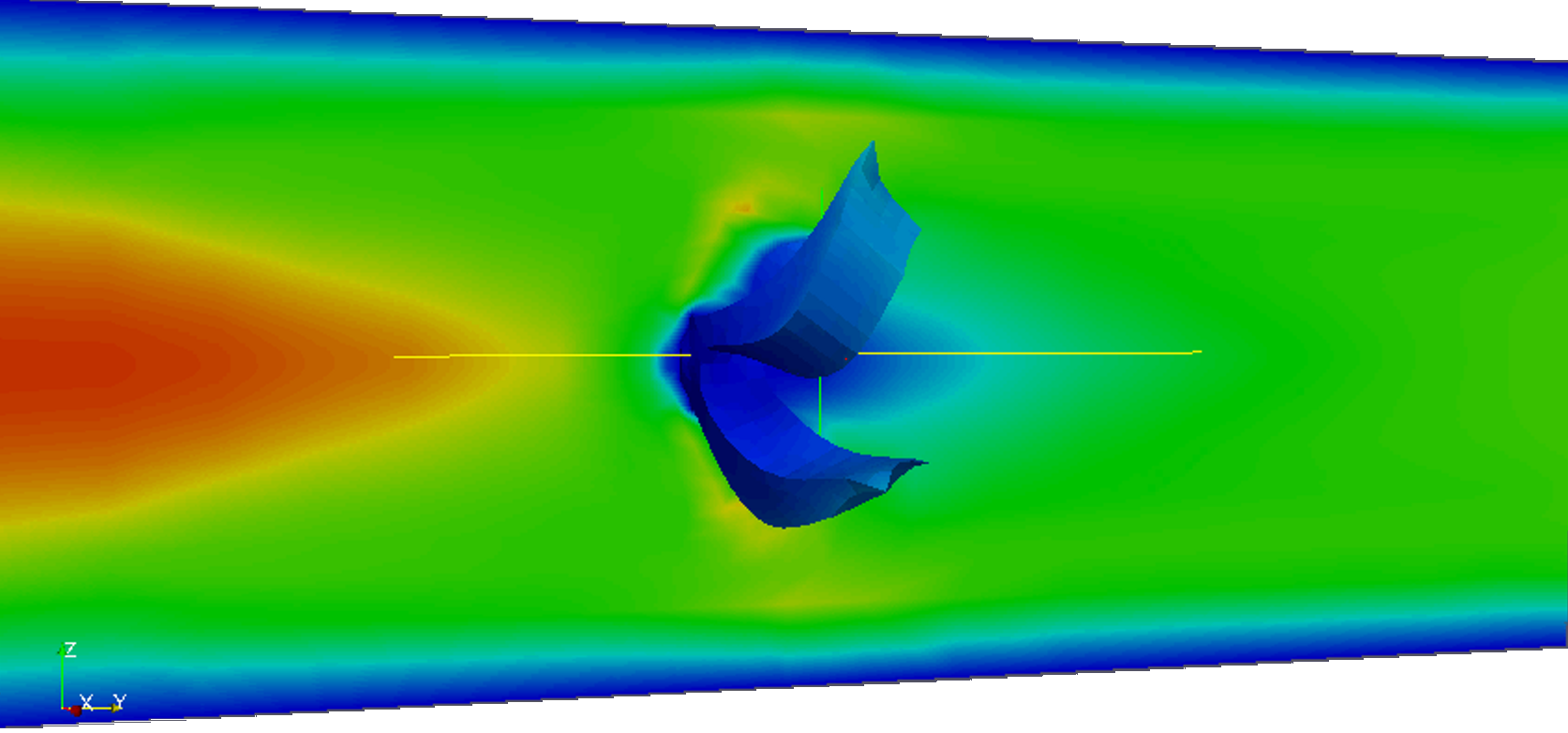}
      \end{minipage}}}
\hspace*{0.0in}
\parbox{1.9in}{\subfigure[Time=0.36s]
{
   \begin{minipage}[t]{1\linewidth}
      \centering
      \includegraphics*[height=.9in,width=1.9in]{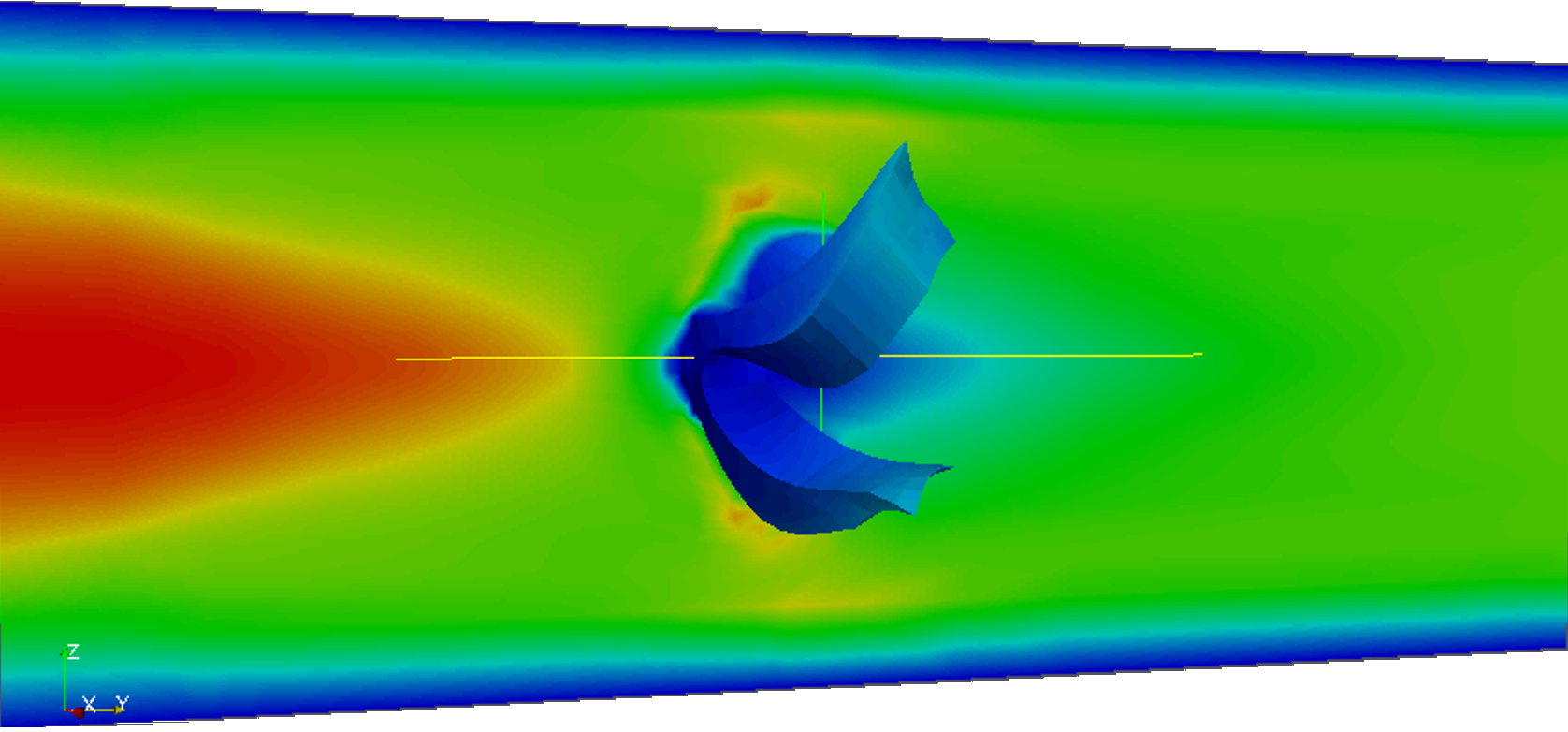}
      \end{minipage}}}
\end{tabular}
} \caption{A more flexible and thinner elastic rotor which deforms
while rotates: the evolution of magnitude of velocity field with
time on the central cross section along flow direction.
\label{fig:softsolid}}
\end{figure}

Now we conduct a consistency check for the developed structure
model. We carry out a series of numerical computations for the
fluid-turbine interaction with respect to an increasing Young's
modulus of the turbine, for instance, from $E=2.5\times 10^4$ Pa all
the way up to $E=2.5\times 10^{9}$ Pa, which means the turbine tends
to be stiffer and stiffer toward a rigid body. Because a rigid body
does not deform, one shall expect that the deformation displacement
in any place of the turbine approaches to zero along with an
increasing Young's modulus. To that end, we just need to check the
displacement component in flow direction since there is not any the
component of rotational displacement existing in that direction. By
the configuration, we set the turbine to spin only in the plane
which is perpendicular to the flow direction. In addition, since the
blade tip is the thinnest part in the turbine, it bears relatively
the largest deformation during the interaction with the fluid. Thus,
by checking the variation tendency of the blade tip displacement in
flow direction along with the increasing Young's modulus, we are
able to validate our developed structure model in the sense of an
asymptotic stiffness. Fig. \ref{fig:BladetipDisplacement} shows a
semi logarithmic plot of the variation of blade tip displacement in
flow direction along with time and the increasing Young's modulus,
where the displacement component is logarithmically scaled in order
to distinctly illustrate that the blade tip displacement approaches
to zero while the Young's modulus becomes much larger at any
instantaneous time, demonstrating that the developed structure model
is consistently correct and the developed numerical method is
convergent with respect to a crucial physical parameter of structure
that may vary, asymptotically.
\begin{figure}[htbp]
\begin{center}
\graphicspath{{:figure:}}
\includegraphics[width=0.65\textwidth]{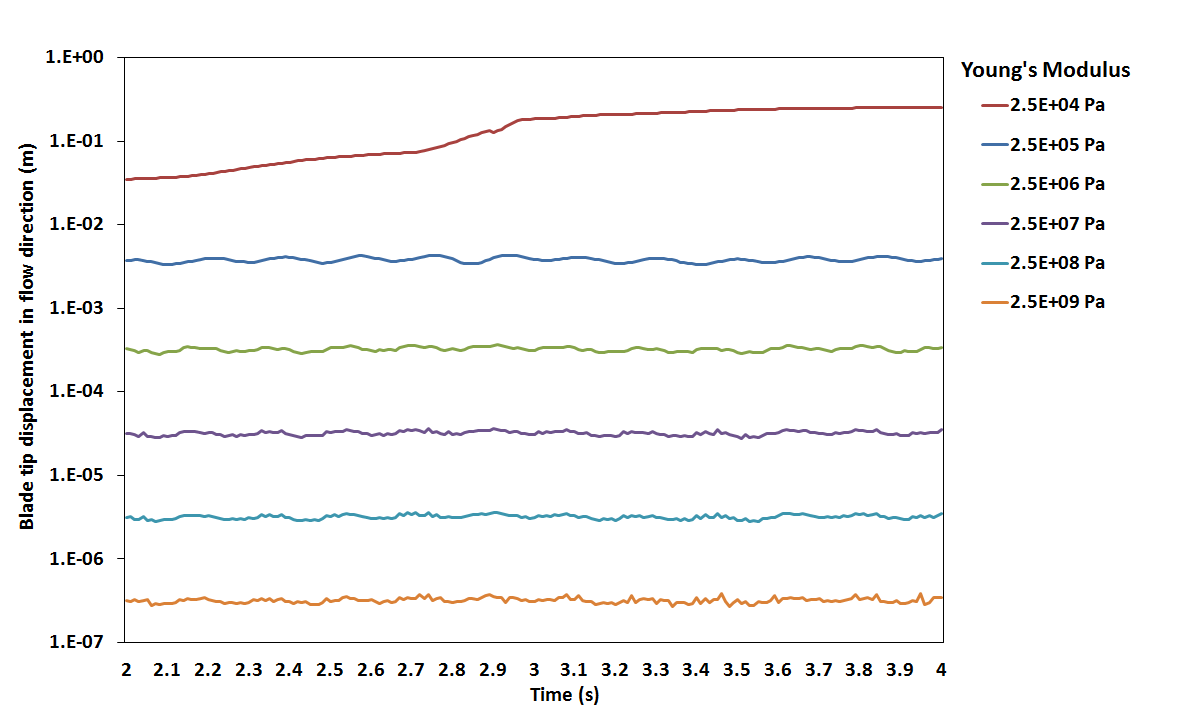}
\end{center}
\caption{The variation tendency of blade tip displacement in flow
direction at the central line of the rotor thickness along with the
increasing Young's modulus and time, where the ordinate is scaled
logarithmically.} \label{fig:BladetipDisplacement}
\end{figure}


Next, we apply our validated monolithic ALE method to a practical
fluid-structure interaction problem involving a realistic
three-dimensional hydro-turbine which is still under construction
for a new hydro plant. As illustrated in Figs.
\ref{fig:3dturbine}-\ref{fig:realturbine}, a realistic turbine bears
five blades curved in three-dimensional fashion and is immersed in
the fluid domain with a shape of circular truncated cone. Thus, we
take advantage of the shape of fluid flow channel and make the
entire fluid domain as rotational, thus no stationary fluid domain
exists in this example. By doing that way we simply solve the ALE
mapping equation in the entire fluid domain to obtain the entire
fluid mesh with zero boundary condition on the outer boundary of
$\hat\Omega_f$. Under the same prescribed incoming velocity and
angular velocity as given for the previous self-defined rotor, we
obtain the similar illustrations for the numerical results shown in
Figs.
\ref{fig:realvelocitymagnitude}-\ref{fig:realcross3dstreamline}.
Fig. \ref{fig:realvelocitymagnitude} displays the development of
velocity magnitude with time marching in a quarter part of fluid
domain along the flow direction, and Fig.
\ref{fig:realcross3dstreamline} shows the velocity field in the
streamline as well as the vector version.
\begin{figure}[!htb]
\begin{center}
\includegraphics*[height=1.in,width=1.in]{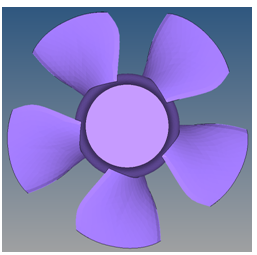}
\hspace*{0.1in}
\includegraphics*[height=1.in,width=.8in]{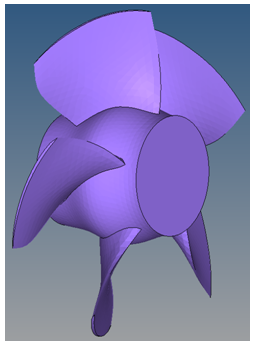}
\hspace*{0.1in}
\includegraphics*[height=1.in,width=2.in]{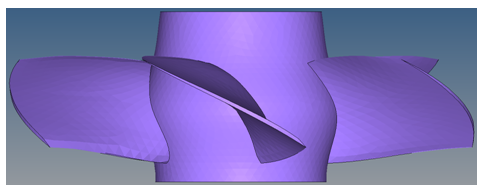}
\end{center}
\caption{A realistic 3D turbine. \label{fig:3dturbine}}
\end{figure}
\begin{figure}[htbp]
\begin{center}
\graphicspath{{:figure:}}
\includegraphics*[height=1.2in,width=2.in]{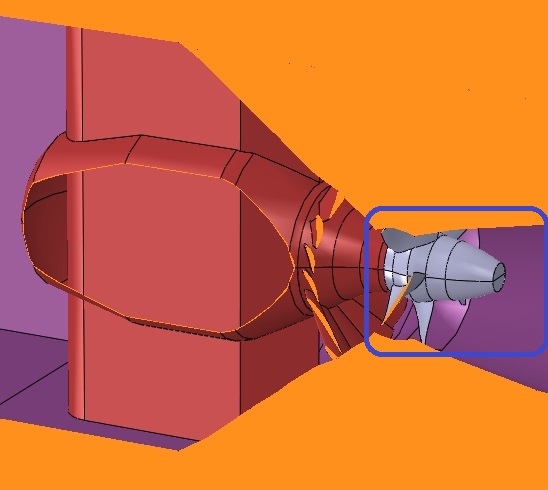}
\hspace*{0.0in}
\includegraphics*[height=1.2in,width=2.in]{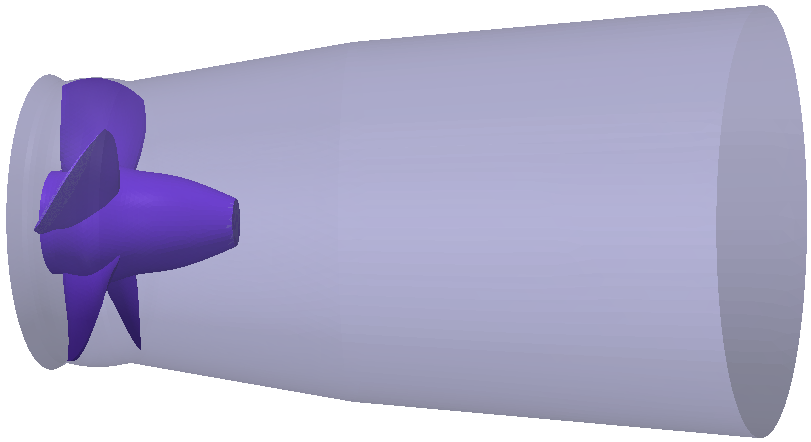}
\hspace*{0.0in}
\includegraphics*[height=1.2in,width=2.in]{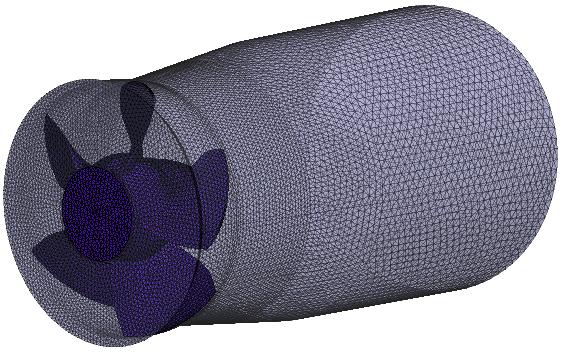}
\end{center}
\caption{The realistic turbine immersed in the fluid domain with the
shape of a circular truncated cone: $\Omega=\Omega_s+\Omega_f$ and
$\Omega_f=\Omega_{rf}$, no stationary fluid domain $\Omega_{sf}$,
and the only interface is
$\Gamma=\partial\Omega_{f}\cap\partial\Omega_{s}$.}
\label{fig:realturbine}
\end{figure}

\begin{figure}[htdp]
\centerline{
\begin{tabular}{c}
\parbox{1.9in}{ \subfigure[Time=1.6s]{
   \begin{minipage}[t]{1\linewidth}
      \centering
      \includegraphics*[height=.9in,width=1.9in]{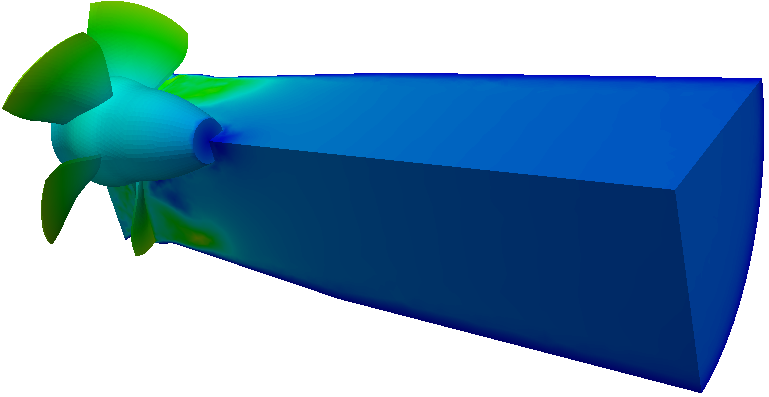}
      \end{minipage}}}
\hspace*{0.0in}
\parbox{1.9in}{\subfigure[Time=3.15s]
{
   \begin{minipage}[t]{1\linewidth}
      \centering
      \includegraphics*[height=.9in,width=1.9in]{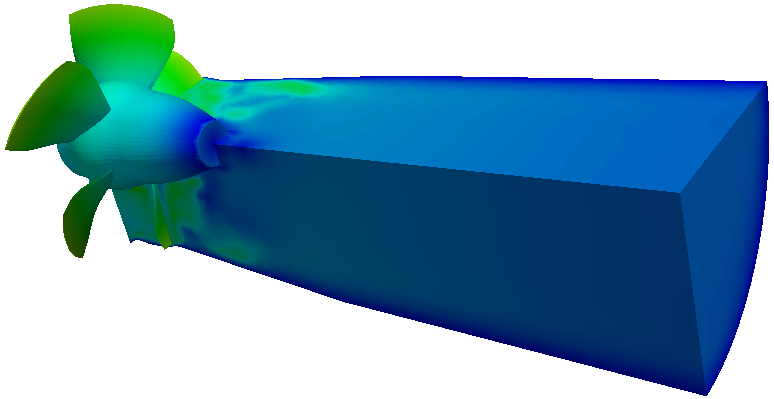}
      \end{minipage}}}
\hspace*{0.0in}
\parbox{1.9in}{\subfigure[Time=4.75s]
{
   \begin{minipage}[t]{1\linewidth}
      \centering
      \includegraphics*[height=.9in,width=1.9in]{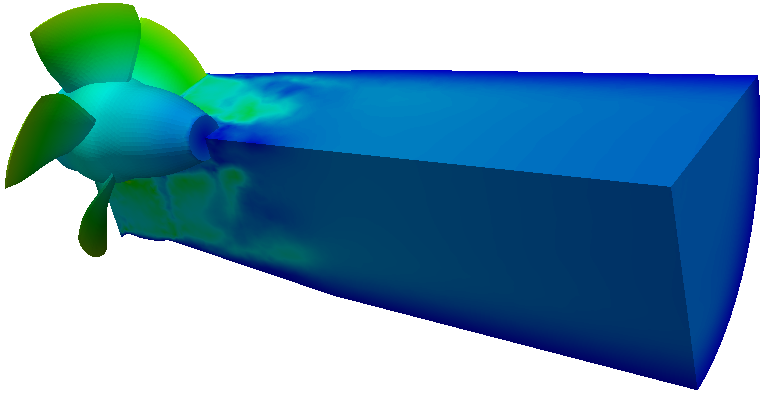}
      \end{minipage}}}
\hspace*{0.0in}
\parbox{.4in}{ 
   \begin{minipage}[t]{1\linewidth}
      \centering
      \includegraphics*[height=1.in,width=.4in]{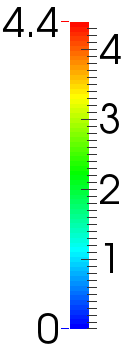}
      \end{minipage}}
\\
\parbox{1.9in}{ \subfigure[Time=6.35s]{
   \begin{minipage}[t]{1\linewidth}
      \centering
      \includegraphics*[height=.9in,width=1.9in]{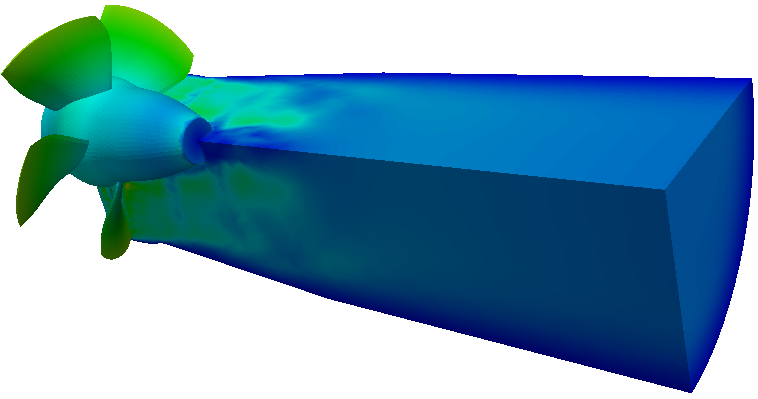}
      \end{minipage}}}
\hspace*{0.0in}
\parbox{1.9in}{\subfigure[Time=7.9s]
{
   \begin{minipage}[t]{1\linewidth}
      \centering
      \includegraphics*[height=.9in,width=1.9in]{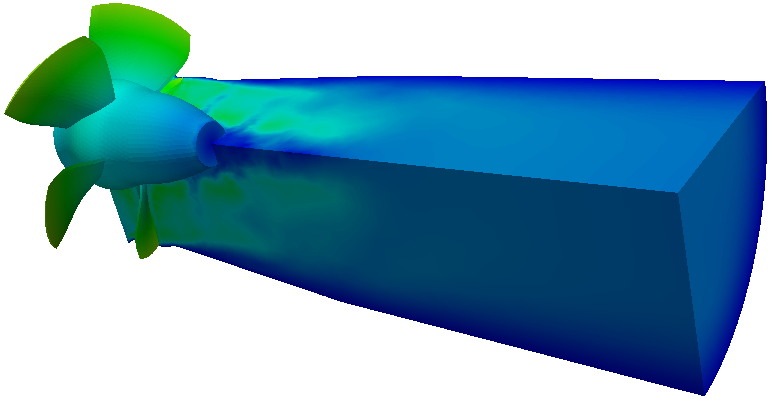}
      \end{minipage}}}
\hspace*{0.0in}
\parbox{1.9in}{\subfigure[Time=9.5s]
{
   \begin{minipage}[t]{1\linewidth}
      \centering
      \includegraphics*[height=.9in,width=1.9in]{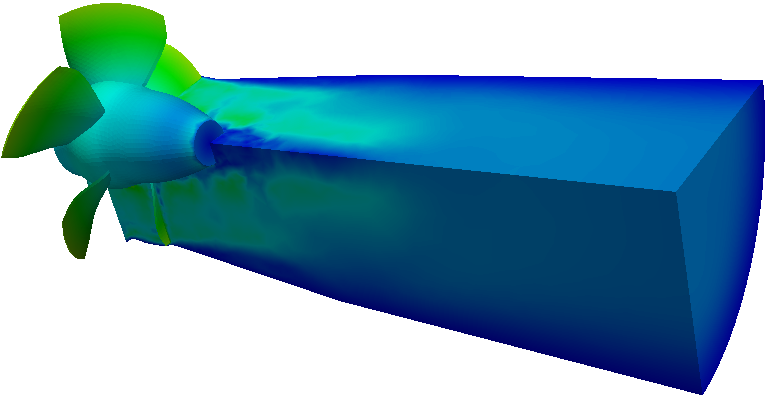}
      \end{minipage}}}
\hspace*{0.0in}
\parbox{.4in}{ 
   \begin{minipage}[t]{1\linewidth}
      \centering
      \includegraphics*[height=1.in,width=.4in]{vrealscale.png}
      \end{minipage}}
\\
\parbox{1.9in}{ \subfigure[Time=11.05s]{
   \begin{minipage}[t]{1\linewidth}
      \centering
      \includegraphics*[height=.9in,width=1.9in]{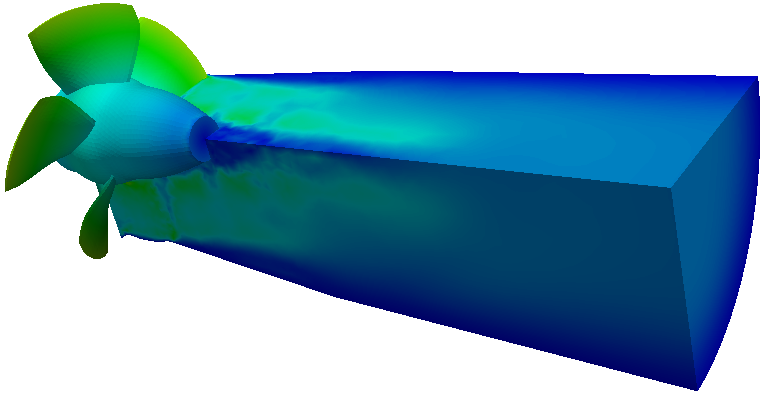}
      \end{minipage}}}
\hspace*{0.0in}
\parbox{1.9in}{\subfigure[Time=12.65s]
{
   \begin{minipage}[t]{1\linewidth}
      \centering
      \includegraphics*[height=.9in,width=1.9in]{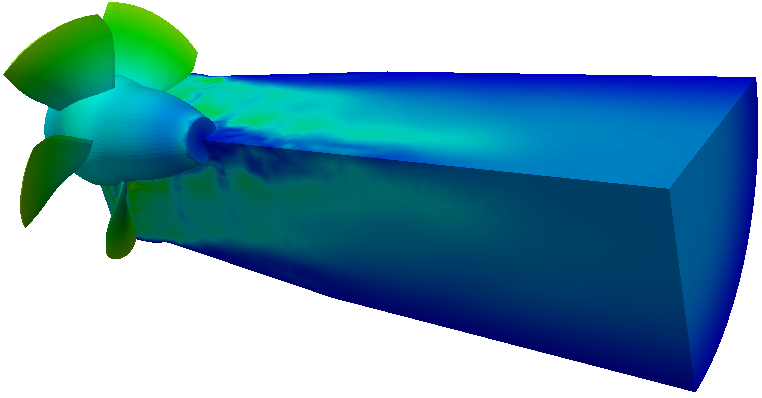}
      \end{minipage}}}
\hspace*{0.0in}
\parbox{1.9in}{\subfigure[Time=14.25s]
{
   \begin{minipage}[t]{1\linewidth}
      \centering
      \includegraphics*[height=.9in,width=1.9in]{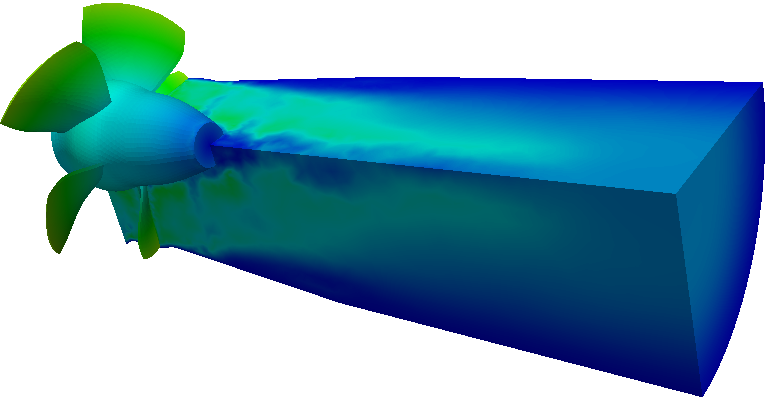}
      \end{minipage}}}
\hspace*{0.0in}
\parbox{.4in}{ 
   \begin{minipage}[t]{1\linewidth}
      \centering
      \includegraphics*[height=1.in,width=.4in]{vrealscale.png}
      \end{minipage}}
\end{tabular}
} \caption{A realistic elastic rotor: the evolution of magnitude of
velocity with time in 1/4 fluid domain.}
\label{fig:realvelocitymagnitude}
\end{figure}

\begin{figure}[htdp]
\centerline{
\parbox{1.8in}{ \subfigure[Magnitude of velocity field]{
   \begin{minipage}[t]{1\linewidth}
      \centering
\includegraphics[height=1.1in,width=1.8in]{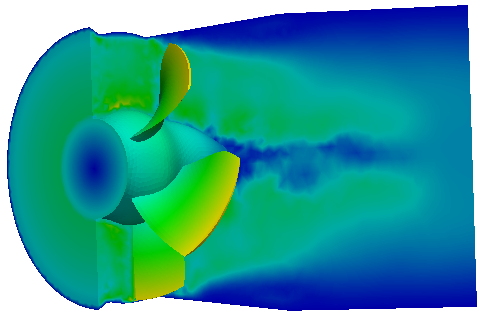}
      \end{minipage}}}
\hspace*{0.0in}
\parbox{2.in}{ \subfigure[Streamline field]{
   \begin{minipage}[t]{1\linewidth}
      \centering
\includegraphics[height=1.in,width=2.in]{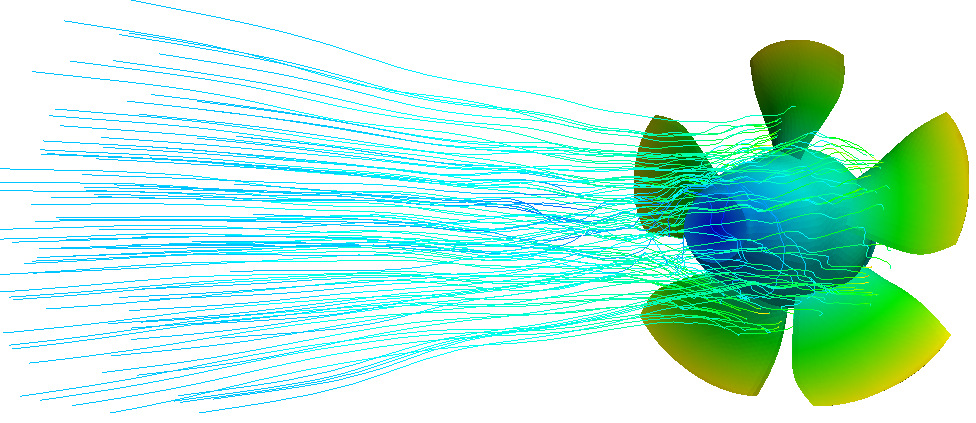}
      \end{minipage}}}
\hspace*{0.0in}
\parbox{2.in}{\subfigure[Velocity field]
{
   \begin{minipage}[t]{1\linewidth}
      \centering
\includegraphics*[height=1.2in,width=2.in]{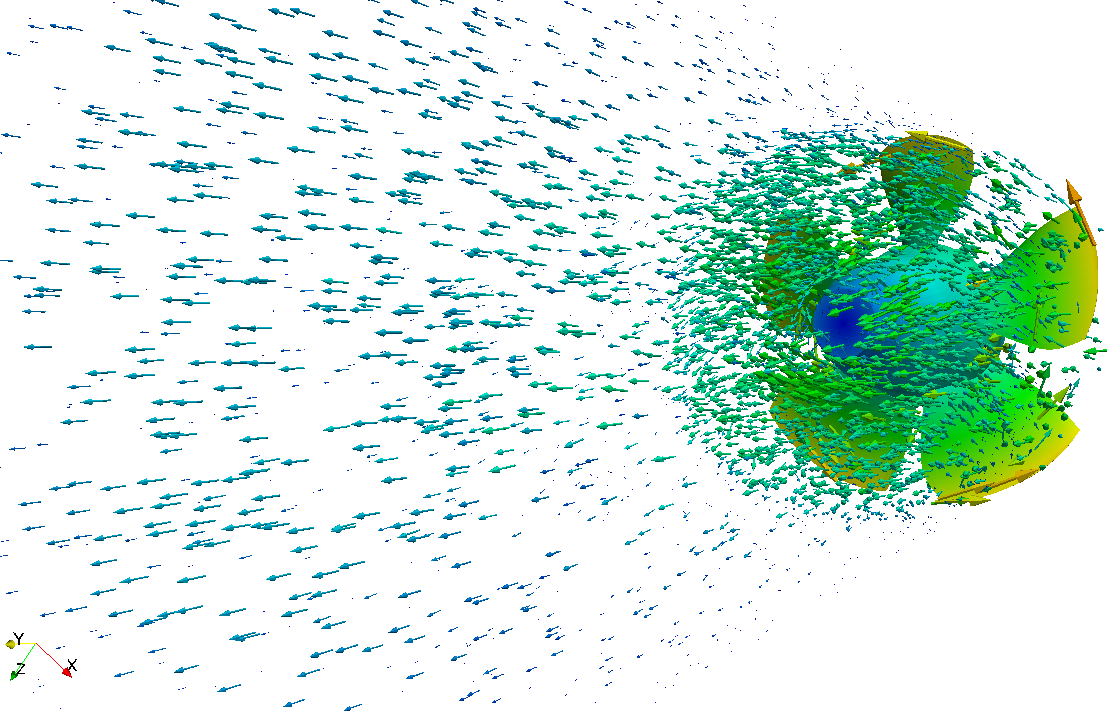}
      \end{minipage}}}
\hspace*{0.0in}
\parbox{.4in}{ 
   \begin{minipage}[t]{1\linewidth}
      \centering
      \includegraphics*[height=1.in,width=.4in]{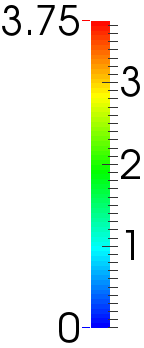}
      \end{minipage}}
} \caption{A realistic elastic rotor: magnitude of velocity field on
the central cross section along flow direction (left); streamline
(middle) and velocity (right) fields at 14.25s.
\label{fig:realcross3dstreamline}}
\end{figure}

\begin{remark} In some senses, our current model and numerical method also work for the passive
rotational case, i.e., for which there is not a prescribed angular
velocity on the axis of rotation. In this case, we do not know the
structure velocity on the cylindrical axis of rotation
$\Gamma_{in}$. In order to make the initially static turbine
spinning up while the fluid flow starts to impact, we can shrink the
cylindrical axis of rotation as a point axis (or a line axis in 3D)
on its barycenter, and fix the structure displacement and then
velocity as zero on the point/line axis, thus the turbine is not be
carried away by the fluid flow but likely rotates about the
point/line axis if the external fluid force is acted on the turbine.
This configuration is inaccurate in the mathematical point of view
but numerically works to some extent, with which we are still able
to apply our developed model and numerical methods to the FSI
problem involving with a passively rotational hydro-turbine, and
observe that the elastic turbine starts to spin up from its
initially static position, and accelerates its rotation until
reaching a steady rotational status, i.e., the fluid impact force
acting on the turbine attains a balance with the resistance force
arising from the fluid itself due to the viscosity, if there is not
any other external torque being exerted on the turbine.

However, we do not intend to illustrate the numerical results of
passive rotational case in this paper since the inaccurate
configuration of (d-2)-dimensional axis does not make fully
mathematical sense, and additionally, might also introduce extra
stress concentration effect on such (d-2)-dimensional axis under a
large external load. Recently we develop a novel and more accurate
numerical method for the passive rotational case, which will be the
subject of a forthcoming paper.
\end{remark}

\section{Conclusions}
\label{sec:conclusion} We build an Eulerian-Lagrangian model for
fluid-structure interaction problem involving with an elastic rotor
based on the arbitrary Lagrangian Eulerian (ALE) approach. Using
velocity as the principle unknown in structure equation, we are able
to deal with the no-slip condition on the interface of fluid and
structure more flexibly with the technique of Master-Slave
Relations. In addition, with the variational formulation and mixed
finite element method, the interface conditions are automatically
enforced. Our proposed novel ALE method can flexibly generate a
rotational and deformable fluid mesh according to the boundary
conditions arising from the ambient elastic rotor and the stationary
fluid. The linear system resulting from linearization and discretization is proved to be well-posed. By means of the developed monolithic algorithm involving the
relaxed fixed-point iteration, a series of satisfactory numerical
results are illustrated and validated for the elastic hydro-turbine
that is actively spinning around its axis of rotation and
interacting with the fluid, demonstrating that our model and
numerical techniques are effective to explore the interactional
mechanism between the fluid and an elastic rotor. As a part of the
future work, we will continue to develop the numerical method for
the case of passive rotation that is more suitable for the
hydro-turbine, and further, study the FSI problem in which the
turbulence flow is involved.

\section*{Acknowledgments}
J. Xu, L. Wang, and K. Yang were partially supported by the U.S.
Department of Energy, Office of Science, Office of Advanced
Scientific Computing Research as part of the Collaboratory on
Mathematics for Mesoscopic Modeling of Materials under contract
number {DE-SC0009249}, also were partially supported by National
Natural Science Foundation of China (NSFC) (Grant No. 91430215). P.
Sun was partially supported by NSF Grant DMS-1418806 and UNLV
Faculty Opportunity Awards, also by {DE-SC0009249} during his
sabbatical leave at Pennsylvania State University in 2013-2014. L.
Zhang was partially supported by NSFC (Grant No. 51279071) and
Doctoral Foundation of Ministry of Education of China (Grant No.
20135314130002). We also appreciate the valuable helps from Dr.
Xiaozhe Hu and FASP group of J. Xu for an efficient
linear algebraic solver.


\end{document}